\DeclareMathAlphabet{\mathpzc}{OT1}{pzc}{m}{it}
 \newtheorem{theorem}{Theorem}[section]
\newtheorem{lemma}[theorem]{Lemma}
\theoremstyle{remark}
\theoremstyle{definition}
\renewcommand{\a}{\alpha}
\renewcommand{\b}{\beta}
\renewcommand{\l}{\lambda}
 \newcommand{\bea}{\begin{eqnarray}} 
\newcommand{\eea}{\end{eqnarray}}  
\newcommand{\bg}{\begin{gathered}}
\newcommand{\eg}{\end{gathered}}
\newtheorem{thm}{Theorem}[section]
\let\csname equation*\endcsname\relax
\let\csname endequation*\endcsname\relax
\newcommand{\ba}{\begin{array}}
\newcommand{\ea}{\end{array}}
  \numberwithin{equation}{section}
\newcommand{\aw}{Askey--Wilson }
\renewcommand{\a}{\alpha}
\renewcommand{\b}{\beta}
\renewcommand{\l}{\lambda}
\newtheorem{rem}[thm]{Remark}
\begin{document}



\title{A discrete and $q$ Asymptotic Iteration Method}

\author{Mourad E. H.  Ismail}
\email{mourad.eh.ismail@gmail.com. } 
\affiliation{Department of Mathematics, University of Central Florida, Orlando, Florida 32816, USA}

\author{Nasser Saad\footnote{Corresponding Author: nsaad@upei.ca }}
\affiliation{School of Mathematical and Computational Sciences,
University of Prince Edward Island, 550 University Avenue,
Charlottetown, PEI, Canada C1A 4P3.}

\begin{abstract}
We introduce a finite difference and $q$-difference analogues of the 
Asymptotic Iteration Method of Ciftci, Hall, and Saad.  We give necessary, and sufficient condition for the existence of a polynomial solution to a general linear second-order difference or $q$-difference equation subject to a ``terminating condition", which is precisely defined.  When a difference or $q$-difference equation has a polynomial solution, we show how to find the second solution. 
\vskip0.1true in
\noindent {\small 2010 \emph{Mathematics Subject Classification} Primary 39A10, 29A13; Secondary 33D99.}
\end{abstract}

\keywords{asymptotic iteration method, polynomial solutions of difference equations, $q$-difference equations.}



 \maketitle

\section{Introduction} 

The problem of finding polynomial solutions to differential equations of 
Sturm--Liouville type goes back to the 19th century. Routh \cite{Rou} 
essentially solved the problem of finding orthogonal polynomial solutions 
to differential equations of the form 
\bea\label{sec1eq1}
f(x) y^{\prime\prime}(x) + g(x) y^{\prime}(x) + h(x) y(x) = \lambda_n y(x)
\eea
where $f, g$ and $h$ are polynomials and $\lambda_n$ is the spectral 
parameter. He demanded that the (\ref{sec1eq1}) 
has polynomial solutions of degree $n$ for $n=0, 1, \cdots, N$, where 
$N$ is a fixed number $>1$, or is $+\infty$.  Earlier Heine \cite[Section 6.8]{Sze}
  considered polynomial solutions to a differential equation of the form 
\bea
\label{sec1eq2}
f(x) y^{\prime\prime}(x) + g(x) y^{\prime}(x) + h(x) y(x) =0.
\eea
where $f$, and $g$ are given polynomials of degrees at most $p+1$ and 
$p$, respectively, while $h$ is a polynomial of degree $p-1$, to be 
determined in order for equation (\ref{sec1eq2}) to have a polynomial 
solution of a prescribed degree $n$. Stieltjes, motivated by an 
electrostatic equilibrium problem 
\cite[Chapter 3]{Ismbook},  also studied this problem.  
The polynomials $h$ in (\ref{sec1eq2}) are called 
Van Vleck polynomials and the polynomial solution to (\ref{sec1eq2}) is 
called a Stieltjes polynomials. This theory is well-explained in Section 9 of 
Marden's excellent monograph \cite{Mar}.  When $f$ has real and simple 
zeros which interlace with the zeros of $g$ the theory further simplifies, 
see \S 6.8 in \cite{Sze}. 
In 1929  Bochner \cite{Boc} characterized all polynomial solutions 
(not necessarily orthogonal) to \eqref{sec1eq1} with $N = \infty$. 
Routh's theorem was extended to the difference, or $q$-difference 
operators, see the survey article  \cite{Als}. A more general treatment 
is in Chapter 20 of \cite{Ismbook}, where the corresponding problem for the 
\aw operator is also mentioned.  A recent variation on the Routh 
(or Bochner) problem was introduced in the works 
\cite{Gom:Kam:Mil1}--\cite{Gom:Kam:Mil3} by D. G\'{o}mez-Ullate, 
N. Kamran, R. Milson. They looked for equations of the type 
(\ref{sec1eq1}) but they demanded them to have orthogonal polynomials 
solutions of degree $n$, for all $n \ge m$ for some $m$. This 
investigation  generated what is now called exceptional orthogonal 
polynomials. 
\vskip0.1true in
The Asymptotic Iteration Method (AIM) was introduced in 
2003 in 
\cite{Cif:Hal:Saa}, \cite{Saa:Hal:Cif}, see also \cite{Saa:Hal},  as a 
tool to find closed form solution to a fairly large class of second-order 
differential equations.  The method has been applied to a variety of 
problems and seems to provide new insight into an old problem 
\cite{Znojil}. 

\vskip0.1true in
 We felt that working out a discrete and a $q$-analogue of AIM 
is a worthwhile endeavor 
and this paper indeed provides a discrete and a $q$-
analogue of AIM, which we refer to as DAIM and $q$-AIM. The 
techniques used in both cases are almost parallel, so we included a 
detailed treatment of DAIM but only sketched the outline of $q$-AIM. 
We give some  examples to illustrate the power of this approach. 

\vskip0.1true in
Section 2 contains a brief list of definitions and the 
notations used in this 
work.  In Section 3 we introduce the discrete version of AIM, called DAIM. 
In it, we show how to construct two linearly independent solutions of a 
general linear second order difference equation with variable 
coefficients under  the assumption  (\ref{sec3eq6}), which we shall 
call a terminating condition. 
 In Section 4 we prove that the general linear second order difference 
 equation has a polynomial solution if and only if the terminating 
 condition (\ref{sec3eq6}) holds for some $n$.  In Section 5  we give several examples including Euler-type equations and the discrete version of the hypergeometric equation.  Section 6 treats the linear second-order $q$-difference 
 equations where we derive the theory $q$AIM in parallel with the DAIM 
 technique.  We also characterize $q$-difference equations which 
 have a polynomial solution regarding a terminating condition. Section 7 
we implement the $q$-AIM technique
  to  explore several examples including the $q$-Laguerre difference equation, Al-Salam-Carlitz $q$-difference equation, and the Stieltjes-Wigert $q$-difference equation.  Section 8 discusses the limitations 
  of the AIM, DAIM, and $q$-AIM method. 
\vskip0.1true in
It is worth mentioning that the Heine and Stieltjes theories 
for differential 
equations with polynomial solutions have not been extended to 
the difference or $q$-difference equations. It will be interesting to develop such a theory.
\vskip0.1true in
\begin{rem}\label{rem1}
The difference or $q$-difference equations we consider have parameters. One important point is that it may be easy to find necessary conditions on the parameters in order for the equation to have a polynomial solution. Our approach gives necessary and sufficient conditions for a polynomial solution to exist. 
\end{rem}

\section{Preliminaries for  difference and $q$-difference equations}
\noindent 
 
\noindent  
 It easy to see that the  problem 
\begin{equation}\label{sec2eq1}
y(n+1)=\lambda(n)y(n)+g(n),\qquad y(n_0)=y_0,\qquad n\ge n_0\ge 0.
\end{equation}
when $\lambda(n) \ne 0$ for all $n$, has  the solution  
\begin{align}\label{sec2eq2}
y(n)=\left(\prod_{i=n_0}^{n-1} \lambda(i)\right)y_0+
\sum_{i=n_0}^{n-1}\left[\left(\prod_{\ell=i+1}^{n-1}
\lambda(\ell)\right)g(i)\right].
\end{align}

\noindent We shall use the standard notation for the finite difference 
operators  $E, \Delta, \nabla$ as in \cite{Jor}, \cite{Mil}.  
 In general, for $n=1,2,\dots,$  we have 
\bea
\label{sec2eq3}
\Delta^n f(x) &=& ((E- I)^n f)(x)=\sum_{k=0}^n(-1)^k {n\choose k} f(x+n-k),
   \\
\nabla^n f(x)&=&((I- E^{-1})^n f)(x)=  \sum_{k=0}^n(-1)^k {n\choose k} 
f(x-k).
\label{sec2eq4}
\eea
Some of the formulas used in the sequel are:
\begin{align}
\nabla^k f(x+k)&=\Delta^k f(x),\qquad \Delta^k f(x-k)=\nabla^k f(x), 
\quad k=1, 2, \cdots, \label{sec2eq5}\\
\Delta \nabla f(x)&=\nabla\Delta f(x) = f(x+1)-2f(x)+f(x-1)= (\Delta-  \nabla) f(x).\label{sec2eq6}
\end{align}
 
\vskip0.1true in
\noindent  The product rule is 
\bea\label{sec2eq7}
\Delta[f(x)g(x)]
&=& g(x) \Delta f(x)+f(x+1)\Delta g(x)\\
&=&f(x)\,\Delta g(x)+g(x)\,\Delta f(x)+\Delta f(x)\,\Delta g(x).
\label{sec2eq8}
\eea
The quotient rule is 
\bea
\label{sec2eq9}
\Delta \left(\frac{g(x)}{f(x)}\right) =\frac{f(x)\Delta g(x)-
g(x)\Delta f(x)}{f(x)f(x+1)}.
\eea
The symmetric  Leibniz rule for finite difference operators is \cite{Max}
\bea\label{sec2eq10}
(\Delta^n fg)(x) = n!  \sum_{j, k\ge 0,~ j+k \le n} \frac{(\Delta^jf)(x) 
(\Delta^k g)(x)}{j!\,k!\,(n-j-k)!}. 
\eea
 \vskip0.1true in
\noindent The notation for $q$-shifted factorials  is \cite{Gas:Rah}, \cite{And:Ask:Roy} 
 \bea\label{sec2eq11}
 (a;q)_0 := 1,\quad (a;q)_n = \prod_{j=0}^{n-1} (1-a\,q^j), \quad n =1, 2, 
 \cdots, \qquad\textup {or}\quad\; \infty.
 \eea
 Here we always assume that $0 < q <1$.  The $q$-analogue of the 
 binomial coefficient is 
 \bea\label{sec2eq12}
{n \brack k}_q := \frac{(q;q)_n}{(q;q)_k (q;q)_{n-k}}.
 \eea
  We also have 
  \bea\label{sec2eq13}
  \label{eqdan}
 (1-q)^n x^n  (D_q^nf)(x) = q^{-{n\choose 2}} \sum_{k=0}^n 
 {n\brack k}_q (-1)^k q^{{k\choose 2}} f(xq^{n-k}). 
 \eea
The product and quotient rules are 
\begin{align}
D_q[ f(x)g(x)]&=g(x)D_q f(x)+f(qx)D_q g(x), 
\label{sec2eq14}\\ 
D_q \left(\frac{f(x)}{g(x)}\right)&=\frac{g(x)D_qf(x)-f(x)D_qg(x)}{g(qx)g(x)}.  \label{sec2eq15}
\end{align}
Let $\a(x)$ be continuous at $x=0$.  Then the  solution to 
 \bea
 \label{sec2eq16}
 (D_q y)(x) = \a(x) y(x),
 \eea
 which is continuous at $x=0$ is 
 \bea
 \label{sec2eq17}
 y(x) = \dfrac{y(0)}{\prod_{k=0}^\infty [1 - (1-q)\,x\,q^k \a(xq^k)]}.
 \eea
 This follows trivially. Moreover if $\a(x)$ and $\b(x)$ are continuous at $x=0$ then the solution to 
 \bea
 \label{sec2eq18}
 (D_q y)(x) = \a(x) y(x)+ \b(x), 
 \eea
 which is continuous at $x=0$, is given by 
 \bea
 \label{sec2eq19}
 y(x) = \frac{y(0)}{\prod_{k=0}^\infty [1 - (1-q)xq^k \a(xq^k)]} + \sum_{k=0}^\infty \frac{xq^k(1-q) \b(xq^k)} 
 {\prod_{j=0}^{k} [1 - (1-q)xq^j \a(xq^j)]}. 
 \eea
  If $y(x)$ satisfies a linear homogeneous difference equation then 
 $f(x)y(x)$ will satisfy the same equation if $f$ is unit periodic, that is 
 $f(x+1) = f(x)$. Thus unit periodic functions play the role 
 played by constants in the theory of differential equations. Similarly functions satisfying $f(qx) = f(x)$  play the role of constants in the theory of $q$-difference equations.  
 
\section{Discrete Asymptotic Iteration Method (DAIM)}
The second-order difference equation may take one of the following forms
\begin{eqnarray}
\Delta^2 y(x)&=\lambda_0(x)\Delta y(x)+s_0(x) y(x), \label{sec3eq1}\\
 \Delta\nabla y(x)&=\alpha_0(x)\Delta y(x)+\beta_0(x)y(x),\label{sec3eq2}
\\
\nabla\Delta y(x)&=\alpha_1(x)\nabla y(x)+\beta_1(x)y(x),
\label{sec3eq3}\end{eqnarray}
These forms are equivalent and we shall focus our attention on  the first form 
(\ref{sec3eq1}).  
\vskip0.1true in
\noindent Unlike the original form of AIM where the boundary conditions 
contributed in setting up the asymptotic solution, in the discrete version  
the initial conditions must be incorporated within the development of the analytic solution at later stage.  

 \begin{thm} \label{thm3.1}
 If $y(x)$ satisfies (\ref{sec3eq1}), then 
\begin{eqnarray}\label{sec3eq4}
\Delta^{n+2} y(x) =\lambda_{n}(x)\Delta y(x)+s_{n}(x) y(x),
\end{eqnarray}
where
\begin{eqnarray}\label{sec3eq5}
\begin{gathered}
\lambda_n(x)=\Delta \lambda_{n-1}(x)+\lambda_{n-1}(x+1)\lambda_0(x)+s_{n-1}(x+1), \quad n >0,\\
s_n(x)=\Delta s_{n-1}(x)+ \lambda_{n-1}(x+1)s_0(x), \quad n >0.
\end{gathered}
\end{eqnarray}
 \end{thm}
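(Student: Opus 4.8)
The plan is to establish \eqref{sec3eq4}--\eqref{sec3eq5} by induction on $n$. The base case $n=0$ is nothing but the hypothesis \eqref{sec3eq1}, which already exhibits $\lambda_0(x)$ and $s_0(x)$ as the coefficients of $\Delta y(x)$ and $y(x)$. For the inductive step I would assume \eqref{sec3eq4} at level $n-1$, namely $\Delta^{n+1} y(x)=\lambda_{n-1}(x)\Delta y(x)+s_{n-1}(x)y(x)$, and apply the operator $\Delta$ to both sides to advance to level $n$, using $\Delta^{n+2}y=\Delta(\Delta^{n+1}y)$.

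The single computational ingredient is the (non-symmetric) product rule \eqref{sec2eq7}, $\Delta[f(x)g(x)]=g(x)\Delta f(x)+f(x+1)\Delta g(x)$. Applying it to each of the two summands $\lambda_{n-1}(x)\Delta y(x)$ and $s_{n-1}(x)y(x)$ gives
\[
\Delta^{n+2} y(x)=\Delta\lambda_{n-1}(x)\,\Delta y(x)+\lambda_{n-1}(x+1)\,\Delta^2 y(x)+\Delta s_{n-1}(x)\,y(x)+s_{n-1}(x+1)\,\Delta y(x),
\]
where I have used $\Delta(\Delta y(x))=\Delta^2 y(x)$ in the first term.

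The decisive move is then to eliminate the second difference $\Delta^2 y(x)$ by invoking the original equation \eqref{sec3eq1}: substituting $\Delta^2 y(x)=\lambda_0(x)\Delta y(x)+s_0(x)y(x)$ collapses the right-hand side into a combination of $\Delta y(x)$ and $y(x)$ only. Reading off the coefficient of $\Delta y(x)$ yields $\Delta\lambda_{n-1}(x)+\lambda_{n-1}(x+1)\lambda_0(x)+s_{n-1}(x+1)$, which is exactly $\lambda_n(x)$, while the coefficient of $y(x)$ is $\Delta s_{n-1}(x)+\lambda_{n-1}(x+1)s_0(x)$, which is exactly $s_n(x)$. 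This closes the induction and proves the theorem.

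I do not anticipate a genuine obstacle; the only point demanding care is that the finite-difference product rule is asymmetric, so the factor multiplying $\Delta^2 y(x)$ — and hence the quantity that feeds into both recurrences — is the \emph{shifted} value $\lambda_{n-1}(x+1)$ rather than $\lambda_{n-1}(x)$, with the companion shifted term $s_{n-1}(x+1)$ arising the same way. Tracking these shifts correctly is what produces the precise form of \eqref{sec3eq5}; one could instead use the symmetric form \eqref{sec2eq8}, but that introduces cross-terms of the type $\Delta\lambda_{n-1}(x)\,\Delta(\Delta y(x))$ that must subsequently be reabsorbed, so the asymmetric rule \eqref{sec2eq7} is the cleaner route.
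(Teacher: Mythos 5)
Your proposal is correct and follows exactly the route the paper intends: the paper's proof consists solely of the remark that the result follows by induction on $n$, and your inductive step (apply $\Delta$ to the level-$(n-1)$ identity via the asymmetric product rule \eqref{sec2eq7}, then eliminate $\Delta^2 y(x)$ using \eqref{sec3eq1}) is the standard way to carry that induction out and reproduces \eqref{sec3eq5} precisely. Your attention to the shift $\lambda_{n-1}(x+1)$ coming from the asymmetry of the product rule is exactly the point that must be tracked.
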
    
 \vskip0.1true in
 \begin{proof}
\noindent The proof is by induction on $n$. 
\end{proof}

\noindent  We note that the above mentioned construction is reminiscent of the 
construction of the Lommel polynomials from the three-term recurrence 
relation of the Bessel functions given in Watson \cite{Wat} and is 
reproduced in \cite{Ismbook}. The $q$-Lommel polynomials associated with 
$J_\nu^{(2)}$ was given in \cite{Ism1} while the construction associated 
with $J_\nu^{(3)}$ was given in \cite{Koe:Swa}.

\begin{thm}
Let $\lambda_n$ and $s_n$ be as in (\ref{sec3eq5}), and set $\delta_n(x) = \lambda_n(x)\,s_{n-1}(x)-\lambda_{n-1}(x)\,s_{n}(x)$. 
If $\delta_n(x)=0$, then $\delta_{m}(x)=0$ for all $m \ge n$.
\end{thm}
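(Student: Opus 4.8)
The plan is to reduce the whole statement to a single one-step propagation: it suffices to show that $\delta_n(x)\equiv 0$ forces $\delta_{n+1}(x)\equiv 0$, since the identical argument then applies with $n$ replaced by $n+1$, and an induction on $m\ge n$ delivers $\delta_m\equiv 0$ for all $m\ge n$. The base case is the hypothesis, so everything rests on the inductive step.

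First I would record a master identity expressing $\delta_{m+1}$ through $\lambda_m,s_m$ alone. Substituting the recurrences (\ref{sec3eq5}) into $\delta_{m+1}=\lambda_{m+1}(x)s_m(x)-\lambda_m(x)s_{m+1}(x)$ and writing $\Delta f(x)=f(x+1)-f(x)$, the pure products $\lambda_m(x)s_m(x)$ cancel and one is left with $\delta_{m+1}(x)=T[\lambda_m,s_m](x)$, where
\[ T[u,v](x):=v(x)u(x+1)-u(x)v(x+1)+\lambda_0(x)\,u(x+1)v(x)+v(x)v(x+1)-s_0(x)\,u(x)u(x+1). \]
This is the only genuine computation and it is routine. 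Shifting the index by one gives the companion identity $\delta_m(x)=T[\lambda_{m-1},s_{m-1}](x)$.

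The structural observation that makes the proof work is that every monomial of $T[u,v]$ is a product of exactly one factor evaluated at $x$ (one of $u(x),v(x)$) and one factor evaluated at $x+1$ (one of $u(x+1),v(x+1)$), while $\lambda_0(x),s_0(x)$ enter only as inert coefficients. Hence $T$ is bi-homogeneous of degree one at $x$ and at $x+1$: for any function $\rho$ one has $T[\rho u,\rho v](x)=\rho(x)\rho(x+1)\,T[u,v](x)$. Now I combine the two facts. The assumption $\delta_n\equiv 0$ means $\lambda_n s_{n-1}=\lambda_{n-1}s_n$, i.e. the pairs $(\lambda_n,s_n)$ and $(\lambda_{n-1},s_{n-1})$ are proportional, say $\lambda_n=\rho\,\lambda_{n-1}$ and $s_n=\rho\,s_{n-1}$. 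Applying the scaling law with $u=\lambda_{n-1}$, $v=s_{n-1}$ then yields
\[ \delta_{n+1}=T[\lambda_n,s_n]=T[\rho\lambda_{n-1},\rho s_{n-1}]=\rho(x)\rho(x+1)\,T[\lambda_{n-1},s_{n-1}]=\rho(x)\rho(x+1)\,\delta_n=0, \]
which is exactly the inductive step.

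The main obstacle I anticipate is the well-definedness of the proportionality factor $\rho$: the relation $\lambda_n s_{n-1}=\lambda_{n-1}s_n$ supplies a bona fide common ratio only away from the common zeros of $\lambda_{n-1}$ and $s_{n-1}$, and $\rho(x+1)$ must make sense as well. I would circumvent this by regarding $\lambda_n,s_n,\delta_n$ as rational functions of $x$ (and of the parameters), so that $\delta_n\equiv 0$ forces $\lambda_n/\lambda_{n-1}=s_n/s_{n-1}=\rho$ in the field of rational functions and the scaling identity becomes an identity there; alternatively one argues pointwise off the finite exceptional set and extends by continuity. The fully degenerate case $\lambda_{n-1}\equiv s_{n-1}\equiv 0$ is harmless, since (\ref{sec3eq5}) then forces $\lambda_n\equiv s_n\equiv 0$, making every subsequent $\delta_m$ vanish outright.
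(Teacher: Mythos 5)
Your argument is correct, and it terminates in exactly the relation that the paper's computation also produces, namely $\delta_{n+1}(x)=\rho(x)\,\rho(x+1)\,\delta_n(x)$ with $\rho=s_n/s_{n-1}$; the difference lies in how that factorization is reached, and your route is cleaner. The paper expands $\delta_{n+1}$ via (\ref{sec3eq5}) just as you do, but then extracts the factor by dividing and multiplying by $s_n(x)s_n(x+1)$, invoking the quotient rule to recognize $\Delta\bigl(\lambda_n/s_n\bigr)$, substituting $\lambda_n/s_n=\lambda_{n-1}/s_{n-1}$ from the hypothesis, and unwinding to exhibit $\delta_n(x)/\bigl(s_{n-1}(x)s_{n-1}(x+1)\bigr)$; that chain tacitly assumes $s_{n-1}$ and $s_n$ are nonvanishing wherever it is applied. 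Your key lemma --- that $(u,v)\mapsto T[u,v]$ is bilinear with exactly one factor evaluated at $x$ and one at $x+1$, so that $T[\rho u,\rho v](x)=\rho(x)\rho(x+1)T[u,v](x)$ --- replaces the ratio $\lambda_n/s_n$ by the proportionality of the pairs $(\lambda_n,s_n)$ and $(\lambda_{n-1},s_{n-1})$, which requires only that the pair $(\lambda_{n-1},s_{n-1})$ not vanish identically, and you dispose of that degenerate case separately via (\ref{sec3eq5}). What this buys is precisely the rigor the paper leaves implicit: the exceptional sets are named and handled, and the argument is a genuine identity in the field of rational functions, which covers every example in the paper. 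The one point worth spelling out when you write it up is the passage from the vanishing cross product $\lambda_n s_{n-1}-\lambda_{n-1}s_n=0$ to the existence of a single common ratio $\rho$ serving both components; over a field with one of the two vectors nonzero this is immediate, and you have in substance already said so.
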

\begin{proof}
It suffices to show 
that if $\delta_n(x)=0$, then $\delta_{n+1}(x)=0$. Using the definition 
(\ref{sec3eq5}),  we  find that 
\begin{align*}
\delta_{n+1}(x)
&= \lambda_{n+1}(x)s_n(x)-\lambda_n(x)s_{n+1}(x)\\
&=
s_{n}(x) \Delta \lambda_{n}(x) -\lambda_{n}(x)\,\Delta s_{n}(x)+\lambda_{n}(x+1)\,s_{n}(x)\lambda_0(x)+s_{n}(x+1)\,s_{n}(x)-\lambda_{n}(x) \lambda_{n}(x+1)s_0(x)\\
&=\left(\dfrac{s_{n}(x) \Delta \lambda_{n}(x)-\lambda_{n}(x)\,\Delta s_{n}(x)}{s_n(x)s_{n}(x+1)}\right) s_n(x)s_{n}(x+1)+\lambda_{n}(x+1)(s_{n}(x)\lambda_0(x)-\lambda_0(x)s_0(x))+s_{n}(x+1)\,s_{n}(x)\\
&=\Delta \left(\dfrac{\lambda_{n}(x)}{s_n(x)}\right)s_n(x)s_{n}(x+1)+\lambda_{n}(x+1)\,s_{n}(x)\lambda_0(x)+s_{n}(x+1)\,s_{n}(x)-\lambda_{n}(x) \lambda_{n}(x+1)s_0(x)\\
&=s_n(x)s_{n}(x+1)\bigg(\Delta \left(\dfrac{\lambda_{n}(x)}{s_n(x)}\right)+1+\dfrac{\lambda_{n}(x+1)}{s_{n}(x+1)}\lambda_0(x)-\dfrac{\lambda_{n}(x) \lambda_{n}(x+1)}{s_n(x)s_{n}(x+1)}s_0(x)\bigg)\\
&=s_n(x)s_{n}(x+1)\left(\Delta \left(\dfrac{\lambda_{n-1}(x)}{s_{n-1}(x)}\right)+1+\dfrac{\lambda_{n-1}(x+1)}{s_{n-1}(x+1)}\left(\lambda_0(x)-\dfrac{\lambda_{n-1}(x) }{s_{n-1}(x)}s_0(x)\right)\right)\\
&=s_n(x)s_{n}(x+1)\left(\dfrac{s_{n-1}(x)\Delta \lambda_{n-1}(x)-\lambda_{n-1}(x)\Delta s_{n-1}(x)}{s_{n-1}(x)s_{n-1}(x+1)}+1+\dfrac{\lambda_{n-1}(x+1)}{s_{n-1}(x+1)}\left(\lambda_0(x)-\dfrac{\lambda_{n-1}(x) }{s_{n-1}(x)}s_0(x)\right)\right)\\
&=s_n(x)s_{n}(x+1)\left(\dfrac{\Delta \lambda_{n-1}(x)+\lambda_{n-1}(x+1)\lambda_0(x)+s_{n-1}(x+1)}{s_{n-1}(x+1)}-\dfrac{\lambda_{n-1}(x)(\Delta s_{n-1}(x)+\lambda_{n-1}(x+1)s_0(x))}{s_{n-1}(x)s_{n-1}(x+1)}\right)\\
&=s_n(x)s_{n}(x+1)\left(\dfrac{\lambda_{n}(x)}{s_{n-1}(x+1)}-\dfrac{\lambda_{n-1}(x) s_{n}(x)}{s_{n-1}(x)s_{n-1}(x+1)}\right)\\
&=s_n(x)s_{n}(x+1)\left(\dfrac{s_{n-1}(x)\lambda_{n}(x)-\lambda_{n-1}(x) s_{n}(x)}{s_{n-1}(x)s_{n-1}(x+1)}\right)=0.
\end{align*}
This completes the proof. 
\end{proof}

\noindent At this stage we make the assumption that 
\begin{eqnarray}\label{sec3eq6}
\dfrac{s_{n}(x)}{\lambda_n(x)} =\dfrac{s_{n-1}(x)}{\lambda_{n-1}(x)},
\end{eqnarray}
holds for some $n$, hence for all the subsequent $n$'s. 
 \begin{thm}\label{Thm3.3}
A solution of the difference equation
\begin{align*}
\Delta^2 y(x)=\lambda_0(x)\Delta y(x)+s_0(x) y(x),
\end{align*}
 is given by
  \bea \label{sec3eq7}
  y(x)=\left(\prod_{i=x_0}^{x-1} \left[1-\dfrac{s_{n-1}(i)}{\lambda_{n-1}(i)\,} \right]\right), \qquad x=0,1,2,\dots,
  \eea
  provided that 
 $$\frac{s_{n}(x)}{\lambda_n(x)} =\dfrac{s_{n-1}(x)}{\lambda_{n-1}(x)},
$$
where $\lambda_n(x)$ and $s_n(x)$ are given by (\ref{sec3eq5}). 
\end{thm}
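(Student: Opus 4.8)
The plan is to verify directly that the product in (\ref{sec3eq7}) solves (\ref{sec3eq1}); the whole argument rests on first reducing the second-order equation to a first-order one. Write $R(x) := s_{n-1}(x)/\lambda_{n-1}(x)$, which by the terminating condition (\ref{sec3eq6}) also equals $s_{n}(x)/\lambda_{n}(x)$. The candidate $y(x) = \prod_{i=x_0}^{x-1}[1 - R(i)]$ is exactly the solution of the first-order problem $y(x+1) = [1 - R(x)]\,y(x)$, $y(x_0)=1$, produced by (\ref{sec2eq2}) with multiplier $1 - R$ and no forcing term. Equivalently, $y$ satisfies
\begin{align*}
\Delta y(x) = -\,R(x)\,y(x).
\end{align*}
So the task is to show that this first-order relation forces the second-order relation (\ref{sec3eq1}).

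Next I would compute both sides of (\ref{sec3eq1}) in terms of $R$ and $y$. Applying $\Delta$ to $\Delta y = -Ry$ and using the product rule (\ref{sec2eq7}) together with $\Delta y = -Ry$ gives
\begin{align*}
\Delta^2 y(x) = -\big[y(x)\,\Delta R(x) + R(x+1)\,\Delta y(x)\big] = y(x)\big[R(x)R(x+1) - \Delta R(x)\big],
\end{align*}
while directly $\lambda_0(x)\Delta y(x) + s_0(x)y(x) = y(x)\big[s_0(x) - \lambda_0(x)R(x)\big]$. Hence, since $y(x)\neq 0$, the theorem reduces to the scalar identity
\begin{align*}
R(x)R(x+1) - \Delta R(x) = s_0(x) - \lambda_0(x)\,R(x). \tag{$\ast$}
\end{align*}

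The step I expect to be the main obstacle is establishing $(\ast)$ from the recurrences (\ref{sec3eq5}) and the terminating condition alone. I would use the condition in the form $s_{n-1}(x) = R(x)\lambda_{n-1}(x)$ and $s_{n}(x) = R(x)\lambda_{n}(x)$, and substitute these into the two lines of (\ref{sec3eq5}). Substituting $s_{n-1} = R\lambda_{n-1}$ into the recurrence for $s_n$ gives
\begin{align*}
R(x)\lambda_n(x) = R(x+1)\lambda_{n-1}(x+1) - R(x)\lambda_{n-1}(x) + \lambda_{n-1}(x+1)\,s_0(x),
\end{align*}
while substituting $s_{n-1}(x+1) = R(x+1)\lambda_{n-1}(x+1)$ into the recurrence for $\lambda_n$ gives an explicit expression for $\lambda_n(x)$. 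Multiplying that expression by $R(x)$ and equating it with the previous display, the term $-R(x)\lambda_{n-1}(x)$ cancels on both sides and the common factor $\lambda_{n-1}(x+1)$ divides out, leaving precisely $R(x) + R(x)\lambda_0(x) + R(x)R(x+1) = R(x+1) + s_0(x)$, which is $(\ast)$ once one writes $\Delta R(x) = R(x+1) - R(x)$. The delicate points are this bookkeeping cancellation and the division by $\lambda_{n-1}(x+1)$; both require $\lambda_{n-1}$ not to vanish at $x$ and $x+1$, a nondegeneracy already implicit in the definition of $R$, and I would record it as a standing hypothesis. Combining the two displays of the second paragraph with $(\ast)$ then yields $\Delta^2 y(x) = \lambda_0(x)\Delta y(x) + s_0(x)y(x)$, so the product in (\ref{sec3eq7}) is a solution of (\ref{sec3eq1}), as claimed.
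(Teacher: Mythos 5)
Your proof is correct and follows essentially the same route as the paper's: both start from the observation that the product satisfies $\Delta y(x) = -R(x)\,y(x)$ with $R = s_{n-1}/\lambda_{n-1}$, differentiate once more, and reduce the claim to an algebraic identity among the $\lambda$'s and $s$'s that is then verified from the recursions (\ref{sec3eq5}) together with the terminating condition (\ref{sec3eq6}). Your bookkeeping via the single discrete Riccati identity $(\ast)$ (obtained by eliminating $\lambda_n$ between the two recursions) is a somewhat cleaner organization of the same computation the paper carries out through its equations (\ref{sec3eq9})--(\ref{sec3eq10}), and you correctly flag the nonvanishing of $\lambda_{n-1}$ that both arguments implicitly require.
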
 

\begin{proof}  Assume that $y$ is defined by (\ref{sec3eq7}). Then  
\begin{equation}\label{sec3eq8}
\dfrac{\Delta y(x)}{y(x)}=-\frac{s_{n-1}(x)}{\lambda_{n-1}(x)}.
\end{equation}
Applying $\Delta$ to (\ref{sec3eq8}) and use the quotient rule 
(\ref{sec2eq15}) we   conclude that          
\begin{eqnarray*}
 \frac{\Delta^2 y(x)}{y(x+1)}-\left(\dfrac{\Delta y(x)}{y(x)}\right)^2\dfrac{y(x)}{y(x+1)}&=-\dfrac{\Delta s_{n-1}(x)}{\lambda_{n-1}(x+1)}+\dfrac{s_{n-1}(x)\Delta \lambda_{n-1}(x)}{\lambda_{n-1}(x)\lambda_{n-1}(x+1)},
\end{eqnarray*}
which is equivalent to 
\begin{equation}
\label{sec3eq9}
\Delta^2 y(x)-\left(\dfrac{s_{n-1}(x)}{\lambda_{n-1}(x)}\right)^2 y(x)
=\left(\dfrac{s_{n-1}(x)\Delta \lambda_{n-1}(x)-\lambda_{n-1}(x)\Delta s_{n-1}(x)}{\lambda_{n-1}(x)\lambda_{n-1}(x+1)}\right)y(x+1).
\end{equation}
Using the recursive DAIM sequences (\ref{sec3eq5}) we find that  
\begin{align}\label{sec3eq10}
s_{n-1}(x)\Delta \lambda_{n-1}(x)-
\lambda_{n-1}(x)\Delta s_{n-1}(x)&=
-s_{n-1}(x)\lambda_{n-1}(x+1)\lambda_0(x) -s_{n-1}(x)s_{n-1}(x+1)
\notag\\
&+\lambda_{n-1}(x) \lambda_{n-1}(x+1)s_0(x),
\end{align} 
Now equation (\ref{sec3eq9}) becomes 
\begin{align*}
\Delta^2 y(x)
&=\left(s_0(x)-\dfrac{s_{n-1}(x)\lambda_0(x)}{\lambda_{n-1}(x)}-\dfrac{s_{n-1}(x)s_{n-1}(x+1)}{\lambda_{n-1}(x)\lambda_{n-1}(x+1)}\right)\Delta y(x)\\
&+ \left(s_0(x)-\dfrac{s_{n-1}(x)\lambda_0(x)}{\lambda_{n-1}(x)}-\dfrac{s_{n-1}(x)s_{n-1}(x+1)}{\lambda_{n-1}(x)\lambda_{n-1}(x+1)}+\left(\dfrac{s_{n-1}(x)}{\lambda_{n-1}(x)}\right)^2\right)y(x),
\end{align*}
which  can be written as
\begin{align*}
\Delta^2 y(x)
&=\lambda_0(x) 
\Delta y(x)+s_0(x) y(x)\\
&+\left(s_0(x)-\lambda_0(x)-\dfrac{s_{n-1}(x)\lambda_0(x)}{\lambda_{n-1}(x)}-\dfrac{s_{n-1}(x)s_{n-1}(x+1)}{\lambda_{n-1}(x)\lambda_{n-1}(x+1)}\right)\Delta y(x)\\
&+ \left(-\dfrac{s_{n-1}(x)\lambda_0(x)}{\lambda_{n-1}(x)}-\dfrac{s_{n-1}(x)s_{n-1}(x+1)}{\lambda_{n-1}(x)\lambda_{n-1}(x+1)}+\left(\dfrac{s_{n-1}(x)}{\lambda_{n-1}(x)}\right)^2\right)y(x),
\end{align*}
Thus, to show that $\Delta^2 y(x) - \lambda_0(x)\Delta y(x)- s_0(x) y(x) =0,$
we need to show that 
\begin{eqnarray*}
&\left(s_0(x)-\lambda_0(x)-\dfrac{s_{n-1}(x)\lambda_0(x)}{\lambda_{n-1}(x)}-\dfrac{s_{n-1}(x)s_{n-1}(x+1)}{\lambda_{n-1}(x)\lambda_{n-1}(x+1)}\right)\Delta y(x)\\
&=- \left(-\dfrac{s_{n-1}(x)\lambda_0(x)}{\lambda_{n-1}(x)}-\dfrac{s_{n-1}(x)s_{n-1}(x+1)}{\lambda_{n-1}(x)\lambda_{n-1}(x+1)}+\left(\dfrac{s_{n-1}(x)}{\lambda_{n-1}(x)}\right)^2\right)y(x). 
\end{eqnarray*}
Using (\ref{sec3eq9}) we see that we need to show that 
\begin{eqnarray*}
 \begin{gathered}
\left(-\lambda_0(x)-\dfrac{s_{n-1}(x)\lambda_0(x)}{\lambda_{n-1}(x)}-\dfrac{s_{n-1}(x)s_{n-1}(x+1)}{\lambda_{n-1}(x)\lambda_{n-1}(x+1)}+s_0(x)\right)\Delta y(x) \\+ \left(\lambda_0(x)+\dfrac{ s_{n-1}(x+1)}{ \lambda_{n-1}(x+1)}-\dfrac{s_{n-1}(x)}{\lambda_{n-1}(x)}\right)\Delta y(x) =0, 
\end{gathered}
\end{eqnarray*}
which is equivalent to showing that 
\begin{eqnarray*}
s_0(x)-\frac{s_{n-1}(x)\lambda_0(x)}{\lambda_{n-1}(x)}
-\frac{s_{n-1}(x)s_{n-1}(x+1)}{\lambda_{n-1}(x)\lambda_{n-1}(x+1)}
+ \frac{ s_{n-1}(x+1)}{ \lambda_{n-1}(x+1)}-\frac{s_{n-1}(x)}
{\lambda_{n-1}(x)} =0.  
\end{eqnarray*}
Multiply the above equality by $\lambda_{n-1}(x)\lambda_{n-1}(x+1)$ and 
apply (\ref{sec3eq10}) to reduce the problem to 
\bea\nonumber
s_{n-1}(x)\Delta \lambda_{n-1}(x)&-\lambda_{n-1}(x)\Delta s_{n-1}(x)+ s_{n-1}(x+1) \lambda_{n-1}(x)-s_{n-1}(x)\lambda_{n-1}(x+1) =0,\nonumber
\eea
which is obviously true. 
\end{proof}

\noindent We now assume that there is an $n$  such that (\ref{sec3eq6}) holds. 
In this case 
\begin{eqnarray}\label{sec3eq11}
\frac{\Delta^{n+2}y(x)}{\Delta^{n+1} y(x)}=\dfrac{\lambda_{n}(x)
\Delta y(x)+s_{n}(x) y(x)}{\lambda_{n-1}(x)\Delta y(x)+s_{n-1}(x) y(x)}
=\frac{\lambda_{n}(x)}{\lambda_{n-1}(x)}.
\end{eqnarray}
This implies
\bea
\label{sec3eq12}
\Delta^{n+1}y(x)=\Delta^{n+1} y(0)\prod_{k=0}^{x-1}\left[1+
\dfrac{\lambda_{n}(k)}{\lambda_{n-1}(k)}\right].
\eea
This is the exact analogue of equation (2.10) in \cite{Cif:Hal:Saa}. Note that (\ref{sec3eq12}) implies 
\begin{equation}\label{sec3eq13}
\Delta^{n+1}y(x+m)=\Delta^{n+1} y(x)\prod_{k=0}^{m-1}\left[1+
\dfrac{\lambda_{n}(x+k)}{\lambda_{n-1}(x+k)}\right].
\end{equation}
Using Theorem (\ref{thm3.1}) we find that  the solution to the difference equation
\begin{eqnarray*}
\Delta^2 y(x)=\lambda_0(x)\Delta y(x)+s_0(x) y(x),
\end{eqnarray*}
solves  the first-order inhomogeneous difference equation
\begin{equation}\label{sec3eq14}
\Delta^{n+1} y(x)\prod_{k=0}^{m-1}\left[1+
\dfrac{\lambda_{n}(x+k)}{\lambda_{n-1}(x+k)}\right]=\lambda_{n-1}(x+m)\,\Delta y(x+m)+s_{n-1}(x+m)\,y(x+m),
\end{equation}
namely, for $m=0,1,2,\dots$,
\begin{align}\label{sec4eq15}
\Delta y(x+m)+\dfrac{s_{n-1}(x+m)}{\lambda_{n-1}(x+m)\,}\,y(x+m)=
\dfrac{\Delta^{n+1} y(x)}{\lambda_{n-1}(x+m)\,}\prod_{k=0}^{m-1}\left[1+
\dfrac{\lambda_{n}(x+k)}{\lambda_{n-1}(x+k)}\right].
\end{align}
Comparing this with  (\ref{sec2eq1}) and (\ref{sec2eq2}) and replacing 
$\Delta^{n+1}y(x)$ by its value from (\ref{sec3eq12})  
 we see that the general solution, using $y(x)=y(x-m+m)$ is given by 
\begin{align}\label{sec3eq16} 
y(x)&=C_2\prod_{i=n_0}^{x-1}\left(1-\dfrac{s_{n-1}(i)}{\lambda_{n-1}(i)\,} \right)\notag\\
&+C_1\sum_{i=n_0}^{x-1}\left(\prod_{\ell=i+1}^{x-1}\left(1-\dfrac{s_{n-1}(\ell)}{\lambda_{n-1}(\ell)}\right)\dfrac{
\left(\prod\limits_{j=n_0}^{i-m-1}\left(1+\dfrac{\lambda_{n}(j)}{\lambda_{n-1}(j)}\right)\right)}{\lambda_{n-1}(i)\,}\prod_{k=0}^{m-1}\left[1+
\dfrac{\lambda_{n}(i-m+k)}{\lambda_{n-1}(i-m+k)}\right]\right).
\end{align} 

\begin{thm}\label{th34}
The general solution to (\ref{sec3eq1}) is given by (\ref{sec3eq16}), where $C_1$ and $C_2$ are unit periodic functions provided that \eqref{sec3eq6} is satisfied.
\end{thm}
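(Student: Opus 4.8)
The plan is to read (\ref{sec3eq1}) as a second-order linear homogeneous difference equation and to exploit the observation recorded at the end of Section~2: if $y$ solves such an equation then so does $f(x)y(x)$ for any unit periodic $f$, since $\Delta^k[f(x)y(x)]=f(x)\Delta^k y(x)$ when $f(x+1)=f(x)$. Thus the unit periodic functions play the role of constants and the solution set is a rank-two module over them. Consequently it suffices to produce two linearly independent solutions; once that is done, (\ref{sec3eq16}) with \emph{arbitrary} unit periodic $C_1,C_2$ is forced to be the general solution. So the proof really splits into exhibiting the two solutions and then arguing that they span everything.

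For the first solution I would simply invoke Theorem~\ref{Thm3.3}, which already verifies that
\[
y_1(x)=\prod_{i=n_0}^{x-1}\left(1-\frac{s_{n-1}(i)}{\lambda_{n-1}(i)}\right)
\]
solves (\ref{sec3eq1}) under the terminating condition (\ref{sec3eq6}); this is precisely the $C_2$-term of (\ref{sec3eq16}). For the second solution I would carry out the reduction of order already displayed in (\ref{sec3eq11})--(\ref{sec4eq15}): under (\ref{sec3eq6}) the ratio $\Delta^{n+2}y/\Delta^{n+1}y$ collapses to $\lambda_n/\lambda_{n-1}$, giving the closed form (\ref{sec3eq12}) for $\Delta^{n+1}y$; substituting this into the identity $\Delta^{n+1}y(x)=\lambda_{n-1}(x)\Delta y(x)+s_{n-1}(x)y(x)$, which is Theorem~\ref{thm3.1} with $n$ replaced by $n-1$, converts the problem into the first-order inhomogeneous recurrence (\ref{sec4eq15}). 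Integrating that recurrence by the explicit formula (\ref{sec2eq2}) produces exactly the $C_1$-term of (\ref{sec3eq16}), with $C_1$ carrying the free datum $\Delta^{n+1}y(0)$ and $C_2$ the free datum $y(n_0)$.

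It then remains to show that these two constants exhaust every solution, and this is the step I expect to be the crux. The natural initial data for a second-order equation are the consecutive values $y(n_0),y(n_0+1)$, whereas (\ref{sec3eq16}) is parametrized by $\bigl(y(n_0),\,\Delta^{n+1}y(0)\bigr)$; since $\Delta^{n+1}y(0)=\lambda_{n-1}(0)\,y(1)+(s_{n-1}(0)-\lambda_{n-1}(0))\,y(0)$, the change between the two parametrizations has determinant $\lambda_{n-1}(0)$, so it is a bijection precisely when $\lambda_{n-1}(0)\neq 0$. Under that nonvanishing hypothesis, which is in any case needed for the quotients appearing throughout to make sense, $y_1$ and the second solution are linearly independent (equivalently, their Casoratian, which obeys a first-order recurrence with nonvanishing multiplier, never vanishes), so (\ref{sec3eq16}) is the general solution; the freedom to take $C_1,C_2$ unit periodic rather than merely constant is exactly the Section~2 remark. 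The principal obstacle is therefore not checking that each piece solves the equation---half of that is Theorem~\ref{Thm3.3} and the other half is the reduction of order above---but the bookkeeping of nondegeneracy: one must keep $\lambda_{n-1}$ and the factors $1+\lambda_n/\lambda_{n-1}$ away from zero so that (\ref{sec3eq12}), (\ref{sec4eq15}), and the Casoratian are all well defined and nonvanishing.
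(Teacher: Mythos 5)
Your proposal follows essentially the same route as the paper: the two solutions are obtained exactly as in the displayed analysis preceding the theorem (the $C_2$-term from Theorem~\ref{Thm3.3}, the $C_1$-term from the reduction of order (\ref{sec3eq11})--(\ref{sec4eq15}) integrated via (\ref{sec2eq2})), and linear independence is settled by the Casorati determinant, which the paper dismisses as ``an easy exercise.'' Your additional bookkeeping of the nondegeneracy hypotheses (keeping $\lambda_{n-1}$ and the factors $1+\lambda_n/\lambda_{n-1}$ away from zero, and the determinant $\lambda_{n-1}(0)$ of the change of initial-data parametrization) is a correct and welcome elaboration of that step, not a different method.
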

\begin{proof}
The analysis before this theorem shows that (\ref{sec3eq16}) gives a 
solution of  (\ref{sec3eq1}). So, we only need to show that the 
coefficients of $C_1$ and $C_2$, say $y_1(x)$ and $y_2(x)$ are linear independent. This holds if and only if the Casorati determinant 
\begin{equation}\label{sec3eq17}
\left|\begin{array}{cc}
y_1(x) &  y_1(x+1) \\
y_2(x) & y_2(x+1)
\end{array}\right|, 
\end{equation}
does not vanish, which is an easy exercise. 
\end{proof} 
\section{A Criterion for Polynomial Solutions}
 \noindent The main results  of this section are Theorems \ref{thm4.1}-\ref{thm4.2} 
 which, respectively,  give necessary, and sufficient conditions for a 
 second order linear difference equation to have a polynomial solution. 
 \begin{thm}\label{thm4.1}  If the second-order difference equation 
$\Delta^2 y(x)=\lambda_0(x)\Delta y(x)+s_0(x)y(x)$  
has a polynomial solution of degree $n$, then
\begin{eqnarray*}
s_n(x)\lambda_{n-1}(x)-s_{n-1}(x)\lambda_{n}(x)=0,
\end{eqnarray*}
where
\begin{align*}
\lambda_n(x)&=\Delta \lambda_{n-1}(x)+\lambda_{n-1}(x+1)\lambda_0(x)+s_{n-1}(x+1),\\
s_n(x)&=\Delta s_{n-1}(x)+ \lambda_{n-1}(x+1)s_0(x).
\end{align*}
\end{thm}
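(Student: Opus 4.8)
The plan is to run Theorem~\ref{thm3.1} at two consecutive indices and then exploit the elementary fact that the forward difference $\Delta$ lowers the degree of a polynomial by exactly one. Take $n\ge 1$ (the degenerate case $n=0$, where $y$ is a nonzero constant, is immediate since then $\Delta^2 y\equiv 0$ forces $s_0 y\equiv 0$, hence $s_0\equiv 0$). Applying the iterated form (\ref{sec3eq4}) at indices $n-1$ and $n$ to the given solution $y$ yields the two identities
\begin{align*}
\Delta^{n+1} y(x) &= \lambda_{n-1}(x)\,\Delta y(x) + s_{n-1}(x)\,y(x),\\
\Delta^{n+2} y(x) &= \lambda_{n}(x)\,\Delta y(x) + s_{n}(x)\,y(x).
\end{align*}
These are genuine identities in $x$: the derivation leading to (\ref{sec3eq5}) is purely algebraic, and since $\lambda_0$ and $s_0$ are polynomials the recurrence (\ref{sec3eq5}) keeps every $\lambda_k$ and $s_k$ a polynomial.

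Next I would invoke the degree hypothesis. If $y$ has degree $n$, then $\Delta^n y$ is a nonzero constant, so $\Delta^{n+1} y\equiv 0$ and $\Delta^{n+2} y\equiv 0$. Substituting these into the two identities above collapses them to the homogeneous linear system
\begin{align*}
\lambda_{n-1}(x)\,\Delta y(x) + s_{n-1}(x)\,y(x) &= 0,\\
\lambda_{n}(x)\,\Delta y(x) + s_{n}(x)\,y(x) &= 0,
\end{align*}
holding identically in $x$.

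To finish, eliminate $\Delta y$: multiplying the first relation by $\lambda_n(x)$, the second by $\lambda_{n-1}(x)$, and subtracting gives
\begin{align*}
\bigl(s_{n-1}(x)\,\lambda_{n}(x) - s_{n}(x)\,\lambda_{n-1}(x)\bigr)\,y(x) = 0.
\end{align*}
Both factors are polynomials in $x$, and $y$ is not the zero polynomial because it has degree $n$. Since the polynomial ring is an integral domain, the first factor must vanish identically, which is precisely the asserted condition $s_n(x)\lambda_{n-1}(x) - s_{n-1}(x)\lambda_{n}(x) = 0$.

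The argument has no real depth, so the ``main obstacle'' is only a matter of careful bookkeeping: one must read every relation as an identity in the variable $x$ (rather than merely at the integers where $y$ happens to be nonzero), so that the conclusion is an honest polynomial identity; and one must defer the use of $y\not\equiv 0$ to the very last step, where it is exactly what licenses the integral-domain cancellation. It is worth remarking in passing that the derived condition is the terminating condition (\ref{sec3eq6}) in the rearranged form $s_n/\lambda_n = s_{n-1}/\lambda_{n-1}$, so this necessary condition is precisely the hypothesis driving Theorem~\ref{Thm3.3}.
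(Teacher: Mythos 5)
Your proof is correct and follows essentially the same route as the paper: both evaluate the iterated relation (\ref{sec3eq4}) at orders $n-1$ and $n$, use that $\Delta^{n+1}y=\Delta^{n+2}y=0$ for a degree-$n$ polynomial, and eliminate one unknown from the resulting $2\times 2$ homogeneous system in $\Delta y$ and $y$. The only (cosmetic) difference is that you eliminate $\Delta y$ and then cancel the nonzero polynomial $y$, whereas the paper eliminates $y$ and cancels $\Delta y$ (which tacitly requires $n\ge 1$, a degenerate case you handle explicitly).
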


\begin{proof} We apply (\ref{sec3eq4}) and the recursions in (\ref{sec3eq5}) to find that 
\bea\nonumber
s_n(x)\Delta^{n+1} y(x) =s_n(x)\lambda_{n-1}(x)\Delta y(x)+s_n(x)s_{n-1}(x) y(x),\\
s_{n-1}(x)\Delta^{n+2} y(x) =s_{n-1}(x)\lambda_{n}(x)\Delta y(x)+s_{n-1}(x)s_{n}(x) y(x),\label{sec4eq1}
\eea
which  then yields   
\begin{eqnarray}\label{sec4eq2}
s_n(x)\Delta^{n+1} y(x)-s_{n-1}(x)\Delta^{n+2} y(x)  &=(s_n(x)\lambda_{n-1}(x)-s_{n-1}(x)\lambda_{n}(x))\Delta y(x),
\end{eqnarray}
If  $y(x)$  
is a polynomial of degree $n$ then $\Delta^{n+1} y(x)= 
\Delta^{n+2} y(x)=0$ and the theorem follows. 
\end{proof}
\noindent The next theorem provides a converse to Theorem \ref{thm4.1}.
\begin{thm}\label{thm4.2}
If $s_n(x)\lambda_{n-1}(x)\neq 0$ and ${\lambda_{n-1}(x)}s_{n}(x)- \lambda_{n}(x)s_{n-1}(x)=0$,
then the difference equation
$\Delta^2 y(x)=\lambda_0(x)\Delta y(x)+s_0(x)y(x)$  
has a polynomial solution whose degree is at most $n$.
\end{thm}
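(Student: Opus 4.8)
The plan is to produce an explicit solution and show that it is annihilated by $\Delta^{n+1}$, which forces it to be a polynomial of degree at most $n$. First I would observe that the hypothesis ${\lambda_{n-1}(x)}s_{n}(x)-\lambda_{n}(x)s_{n-1}(x)=0$ is precisely the terminating condition \eqref{sec3eq6}, and that $s_n(x)\lambda_{n-1}(x)\neq 0$ guarantees $\lambda_{n-1}(x)\neq 0$, so the product in \eqref{sec3eq7} is well defined. Theorem \ref{Thm3.3} then tells us that
\[
y(x)=\prod_{i=x_0}^{x-1}\left[1-\frac{s_{n-1}(i)}{\lambda_{n-1}(i)}\right]
\]
is a genuine solution of \eqref{sec3eq1}, and it is nontrivial since $y(x_0)=1$.

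Next I would compute $\Delta^{n+1}y(x)$ for this $y$. Directly from the product formula, $y(x+1)=y(x)\left[1-s_{n-1}(x)/\lambda_{n-1}(x)\right]$, so that $\lambda_{n-1}(x)\Delta y(x)+s_{n-1}(x)y(x)=0$ identically; this is just \eqref{sec3eq8} rewritten so as to avoid dividing by $y$. On the other hand, applying Theorem \ref{thm3.1} with index $n-1$ in place of $n$ gives $\Delta^{n+1}y(x)=\lambda_{n-1}(x)\Delta y(x)+s_{n-1}(x)y(x)$. Combining the two identities yields $\Delta^{n+1}y(x)=0$ for every $x$.

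Finally I would invoke the elementary lattice fact that a function with $\Delta^{n+1}y\equiv 0$ is the restriction of a polynomial of degree at most $n$: by the Newton forward-difference expansion $y(x)=\sum_{k=0}^{n}\binom{x}{k}\Delta^{k}y(0)$, which is manifestly a polynomial of degree $\le n$. Hence the constructed $y$ is a polynomial solution of \eqref{sec3eq1} of degree at most $n$, as claimed.

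I do not expect a serious obstacle, since the terminating condition does all the real work through Theorem \ref{Thm3.3}; the remaining points require only care, not ingenuity. One is to make sure the non-vanishing hypothesis $s_n(x)\lambda_{n-1}(x)\neq 0$ is used exactly where it is needed, namely to keep \eqref{sec3eq7} well defined and the solution nontrivial, rather than for the degree count itself. The other, and the only genuinely discrete ingredient, is to state cleanly the implication $\Delta^{n+1}y\equiv 0 \Rightarrow \deg y\le n$, which here plays the role that ``$y^{(n+1)}\equiv 0$'' plays in the continuous AIM and should therefore be recorded explicitly.
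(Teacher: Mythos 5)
Your proposal is correct and follows essentially the same strategy as the paper: take the explicit product solution \eqref{sec3eq7}, which Theorem \ref{Thm3.3} certifies as a solution under the terminating condition, and show that $\Delta^{n+1}y\equiv 0$. The only difference is organizational and in your favour: where the paper subtracts the two consecutive AIM identities \eqref{sec4eq1}--\eqref{sec4eq2} and then divides by $s_n(x)\lambda_{n-1}(x)$, you read off $\Delta^{n+1}y=\lambda_{n-1}(x)\Delta y+s_{n-1}(x)y=0$ directly from Theorem \ref{thm3.1} at level $n-1$ combined with \eqref{sec3eq8} (so only $\lambda_{n-1}(x)\neq 0$ is really needed for the degree bound), and you record explicitly the Newton forward-difference step $\Delta^{n+1}y\equiv 0\Rightarrow\deg y\le n$ that the paper leaves implicit.
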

\begin{proof}
When  $
s_n(x)\lambda_{n-1}(x)-s_{n-1}(x)\lambda_{n}(x)=0,$ 
Equation (\ref{sec4eq2}) reduces  to
\begin{eqnarray}\label{sec4eq3}
s_n(x)\Delta^{n+1} y(x)-s_{n-1}(x)\Delta^{n+2} y(x)  =0
\end{eqnarray}
which yields
\begin{eqnarray}\label{sec4eq4}
s_n(x)\Delta^{n+1} y(x)=s_{n-1}(x)\left(\lambda_{n}(x)\Delta y(x)+s_{n}(x) y(x)\right)=s_{n-1}(x)y(x)\left(\lambda_{n}(x)\dfrac{\Delta y(x)}{y(x)}+s_{n}(x)\right).
\end{eqnarray}
Let $y(x)$ be the solution given by  (\ref{sec3eq7}) then apply 
(\ref{sec3eq8})  to establish 
\begin{eqnarray*}
s_n(x)\Delta^{n+1} y(x)=s_{n-1}(x)y(x)\left(- \lambda_{n}(x)\dfrac{s_{n-1}(x)}{\lambda_{n-1}(x)}+s_{n}(x)\right)=\dfrac{s_{n-1}(x)}{\lambda_{n-1}(x)}y(x)\left({\lambda_{n-1}(x)}s_{n}(x)- \lambda_{n}(x)s_{n-1}(x)\right). 
\end{eqnarray*}
Therefore 
\bea
\nonumber
\Delta^{n+1} y(x)  
=\dfrac{s_{n-1}(x)}{s_n(x)\lambda_{n-1}(x)}y(x)\left({\lambda_{n-1}(x)}s_{n}(x)- \lambda_{n}(x)s_{n-1}(x)\right)=0.
\eea
This shows  that  $y(x)$  is a polynomial of degree at most  $n$.
\end{proof}
\section{ Examples}
\subsection{An equation of Euler type }
\noindent Consider the equation
\begin{eqnarray}\label{sec5eq1}
\Delta^2 y(x)=\frac{2(a-1)}{1+x}\Delta y(x)+\frac{a(1-a)}{x(1+x)} y(x).
\end{eqnarray}
Before applying DAIM to \eqref{sec5eq1} we explain the relevance of Remark \ref{rem1}. If $y=x^n+\text{lower order terms}$, then $x^2 \Delta^2y-n(n-1)x^n$ and $x\Delta y-n x^n$ are polynomials of degree at most $n-1$. Substituting $y=x^n+\text{lower order terms}$ in \eqref{sec5eq1} 
and equating coefficients  of $x^n$ establishes the condition $n(n-1)=2n(a-1)+a(1-a)$, which implies $a=n,n+1$. These are necessary conditions. 

We now apply DAIM with
\begin{align}\label{sec5eq2}
\lambda_0(x)=\frac{2(a-1)}{1+x},\qquad s_0(x)= \frac{a-a^2}{x(1+x)}. 
\end{align}
From the DAIM sequences (\ref{sec3eq5}), we note that
\begin{equation}\label{sec5eq3}
\lambda_1(x)=\frac{3 (a-2) (a-1)}{(1+x) (2+x)},\quad s_1(x)=-\frac{2 (a-2) (a-1) a}{x (1+x) (2+x)}
\end{equation}
and after computing the first few $\lambda_n$'s and $s_n$'s we use 
induction to show that for arbitrary $n$, we have
\begin{equation}\label{sec5eq4}
\begin{gathered}
\lambda_n(x)=\frac{(n+2) \prod_{k=0}^n (a-k-1)}{\prod_{k=0}^{n} (x+k+1)},\quad s_n(x)=-\frac{(n+1)\,a \prod_{k=0}^n (a-k-1)}{\prod_{k=0}^{n+1}(x+k)}.
\end{gathered}
\end{equation}
We then conclude that
\bea\label{sec5eq5}
\begin{gathered}
\delta_n(x)=\lambda_n(x)s_{n-1}(x)-\lambda_{n-1}(x)s_n(x)=-\frac{ \, a\, (1-a)_n (1-a)_{n+1}}{(x)_{n+1} (x+1)_{n+1}}.
\end{gathered}
\eea
Thus $ \delta_n(x)=0$ if $a=n+1$. 
To construct the exact solution where $a=n+1$, we apply (\ref{sec3eq7}) 
and find that
\begin{equation}\label{sec5eq6}
y_n(x)=\left(\prod_{i=x_0}^{x-1}
 \left[1+\frac{n}{i}\right]\right)=\dfrac{(x)_n}{(x_0)_n},\quad n=0,1,2,\dots. 
\end{equation}
To find a second independent solution, we shall use two different approaches, first using the second independent solution as given by equation \eqref{sec3eq16} with $a=n+1$, $m\equiv n=1,2,\cdots$,
\begin{align*} 
y_2(x)&=\sum_{i=n_0}^{x-1}\left(\prod_{\ell=i+1}^{x-1}\left(1-\dfrac{s_{n-1}(\ell)}{\lambda_{n-1}(\ell)}\right)\dfrac{
\left(\prod\limits_{j=n_0}^{i-m-1}\left(1+\dfrac{\lambda_{n}(j)}{\lambda_{n-1}(j)}\right)\right)}{\lambda_{n-1}(i)\,}\prod_{k=0}^{m-1}\left[1+
\dfrac{\lambda_{n}(i-m+k)}{\lambda_{n-1}(i-m+k)}\right]\right)\\
&=\sum _{i=n_0}^{x-1} \frac{(-1)^{1+n} 2^{n_0-n} \Gamma (i+n+1) (n+1)_{x-1} }{n (n+1) \Gamma(x) (1-n)_{n-1} (n+1)_i}\left(\frac{2 (n+1) (1-n)_{n-1} \left(\frac{i-n+3}{2} \right)_n-(n+2) (1-n)_n \left(\frac{i-n+3}{2}\right)_{n-1}}{(n+1) (1-n)_{n-1} \left(\frac{i-n+3}{2} \right)_n}\right)^{n-n_0}\notag\\
&=\frac{(x-n_0) \Gamma (x+n)}{\Gamma (x)}=(x)_{n+1}+(n-n_0)(x)_n.
\end{align*}
A second approach to find the other independent solution follows using the next lemma.   
\begin{lemma} (\cite{kk2011}, Lemma 2, p. 3221)
Let $f$ and $g$ be two linearly independent solutions of equation
\begin{equation}\label{sec5eq7}
\Delta^n w(x)+ a_{n-1}(x)\Delta^{n-1}w(x)+\cdots +a_1(x)\Delta w(x)+a_0(x)w(x)=0\
\end{equation}
Set $u=\Delta (f/g)$. Then $w=u(x)$ satisfies
\begin{equation}\label{sec5eq8}
\Delta^{n-1} w(x)+ b_{n-1}(x)\Delta^{n-2}w(x)+\cdots +b_1(x)\Delta w(x)+b_0(x)w(x)=0
\end{equation}
where
\begin{equation}\label{sec5eq9}
b_j(x)=\sum_{k=j+1}^n {k\choose j+1}a_k(x)\frac{\Delta^{k-j-1}g(x+j+1)}{g(x+n)},\qquad j=0,1,2,\cdots,n-2.
\end{equation}
Here  we have, by convention, $a_n(x)=1$.
\end{lemma}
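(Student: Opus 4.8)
The plan is to treat this as a discrete reduction of order: since $g$ solves \eqref{sec5eq7}, the substitution $f=\phi g$ with $\phi=f/g$ ought to produce, for $u=\Delta\phi$, an equation of one lower order, exactly as in the classical continuous case. The key structural observation is that $u=\Delta\phi$ satisfies $\Delta^{i}\phi=\Delta^{i-1}u$ for every $i\ge 1$, so each difference of $\phi$ of order at least one is a difference of $u$. The role of the hypothesis that $g$ is a solution will be to annihilate all the terms that still carry $\phi$ itself (the order-zero terms), leaving only $u$ and its differences, and thereby lowering the order by exactly one.

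First I would record the iterated form of the product rule \eqref{sec2eq7}. Applying \eqref{sec2eq7} repeatedly, with the shift placed on the $g$-factor, gives the discrete Leibniz formula
\[
\Delta^{k}(\phi g)(x)=\sum_{i=0}^{k}{k\choose i}\,\Delta^{i}\phi(x)\,\Delta^{k-i}g(x+i),
\]
which I would verify by a one-line induction on $k$, the base case $k=1$ being exactly \eqref{sec2eq7} with the roles of $f$ and $g$ interchanged. Substituting $f=\phi g$ and this expansion into $\sum_{k=0}^{n}a_k(x)\Delta^{k}f(x)=0$ with the convention $a_n(x)=1$ yields
\[
\sum_{k=0}^{n}a_k(x)\sum_{i=0}^{k}{k\choose i}\,\Delta^{i}\phi(x)\,\Delta^{k-i}g(x+i)=0.
\]
The $i=0$ contribution collapses to $\phi(x)\sum_{k=0}^{n}a_k(x)\Delta^{k}g(x)$, which vanishes precisely because $g$ solves \eqref{sec5eq7}; this is the one step where the hypothesis on $g$ is used.

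Next I would reindex the surviving terms by setting $i=j+1$, so that $\Delta^{i}\phi=\Delta^{j}u$, and interchange the two summations so that $j$ runs on the outside. For each fixed $j\in\{0,\dots,n-1\}$ the inner sum then runs over $k=j+1,\dots,n$, and the coefficient of $\Delta^{j}u(x)$ is $\sum_{k=j+1}^{n}{k\choose j+1}a_k(x)\,\Delta^{k-j-1}g(x+j+1)$. Isolating $j=n-1$, only $k=n$ survives, so the coefficient of $\Delta^{n-1}u(x)$ equals $a_n(x)\,g(x+n)=g(x+n)$; dividing the whole identity through by $g(x+n)$ produces \eqref{sec5eq8} with $b_j(x)$ exactly as in \eqref{sec5eq9}.

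The computation is essentially bookkeeping, so the only genuine points to watch are the following. I must ensure the shift in the Leibniz rule lands on the $g$-factor with argument $x+i$, since this is what generates the arguments $g(x+j+1)$ and $g(x+n)$ appearing in $b_j$; an off-by-one here would corrupt every coefficient. The division by $g(x+n)$ requires $g(x+n)\neq0$, which is implicit in the statement and holds wherever the reduction is performed. Finally, the linear independence of $f$ and $g$ is what guarantees $u=\Delta\phi\not\equiv0$: were $f=cg$ with $c$ unit periodic, then $\Delta\phi\equiv0$ and \eqref{sec5eq8} would be solved only trivially, so independence is needed exactly to certify that the reduced equation is solved by a nontrivial $u$. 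I expect the interchange of summation and the verification that the top coefficient is $g(x+n)$ to be the only places where real care is required.
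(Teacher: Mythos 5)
Your argument is correct: the discrete Leibniz expansion $\Delta^{k}(\phi g)(x)=\sum_{i=0}^{k}\binom{k}{i}\Delta^{i}\phi(x)\,\Delta^{k-i}g(x+i)$ checks out by induction from \eqref{sec2eq7}, the $i=0$ block is exactly $\phi(x)$ times the equation evaluated at $g$, and the reindexing $i=j+1$ reproduces \eqref{sec5eq9} with leading coefficient $g(x+n)$. The paper itself states this lemma without proof, merely citing Korhonen--Ronkainen, and your reduction-of-order derivation is the standard argument used there, so there is nothing further to compare.
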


For $n=2$, the difference equation \eqref{sec5eq7} reads
\begin{equation}\label{sec5eq10}
\Delta^2 w(x)+a_1(x) \Delta w(x)+a_0(x) w(x)=0
\end{equation}
with $f(x)$ and $g(x)$ be two linearly independent solutions.  Then $w=u(x)=\Delta(f/g)$ satisfies the first-order difference equation
\begin{equation}\label{sec5eq11}
\Delta w(x)+b_0(x) w(x)=0
\end{equation}
 where
\begin{equation}\label{sec5eq12}
b_0(x)=a_1(x)\frac{g(x+1)}{g(x+2)}+2\frac{\Delta g(x+1)}{g(x+2)}=2+\left(a_1(x)-2\right)\frac{g(x+1)}{g(x+2)},
\end{equation}
for, by convention, $a_2(x)=1$. The solution of the first order difference equation \eqref{sec5eq11} is given by
\begin{equation}\label{sec5eq13}
w(x)=C_1\prod_{j=n_0}^{x-1}(1-b_0(j))=C_1\prod_{j=n_0}^{x-1}\left(-1-\left(a_1(j)-2\right)\frac{g(j+1)}{g(j+2)}\right)
\end{equation}
and the second independent solution $f(x)$ follows by solving the first-order inhomogeneous difference equation
\begin{equation}\label{sec5eq14}
f(x+1)-\frac{g(x+1)}{g(x)}f(x)= C_1g(x+1) \prod_{j=n_0}^{x-1}\left(\left(2- a_1(j)\right)\frac{g(j+1)}{g(j+2)}-1\right)
\end{equation} 
The solution of the equation is easily found to be 
\begin{equation}\label{sec5eq15}
f(x)=C_2\left(\prod_{i=n_0}^{x-1}\frac{g(i+1)}{g(i)}\right)+C_1\sum_{i=n_0}^{x-1}\left[\left(\prod_{\ell=i+1}^{x-1}\frac{g(\ell+1)}{g(\ell)}\right) g(i+1) \prod_{j=n_0}^{i-1}\left(\left(2- a_1(j)\right)\frac{g(j+1)}{g(j+2)}-1\right)\right] 
\end{equation}
which resemble the general solution as given by \eqref{sec3eq16}. Thus,  $a_1(x)=-2 n/(1+x)$ and $g(x)=(x)_n$, it follow that
\begin{equation}\label{sec5eq16}
f(x)=C_2\left(\prod_{i=n_0}^{x-1}\frac{i+n}{i}\right)+C_1\sum_{i=n_0}^{x-1}\left[\left(\prod_{\ell=i+1}^{x-1}\frac{\ell+n}{\ell}\right) (i+1)_n \prod_{j=n_0}^{i-1}\left(\left(2+\frac{2n}{1+j}\right)\frac{j+1}{j+1+n}-1\right)\right] 
\end{equation}
Straightforward computation shows that
\begin{equation}\label{sec5eq17}
f(x)=C(x)_n+B(x)_{n+1},
\end{equation}
where $C$ and $B$ are  unit periodic functions 
as expected and easily confirmed by direct substitution. 
\subsection{Difference equation for dual polynomials}
Let $\{Q_n(x)\}$ be a sequence of discrete orthogonal polynomials  and let
 \bea \label{sec5eq18}
 \sum_{j=0}^\infty  Q_m(x_j) Q_n(x_j) w_j = \delta_{m,n}/u_n.
 \eea
 Thus the rows of the matrix whose $(i,j)$ element is $\{Q_i(x_j) 
 \sqrt{u_i w_j}\}, i,j =0, 1, \cdots$ are orthonormal vectors. The associativity of matrix multiplication then implies that this matrix is an orthogonal matrix. This forces the columns to be orthonormal vectors, that is 
 \bea\label{sec5eq19}
 \sum_{n=0}^\infty  Q_n(x_i) Q_n(x_j) u_n = \delta_{i,j}/w_j.
 \eea
 A birth and death process with birth rates $\{\b(n)\}$ and death rates 
 $\{d(n)\}$ generates a sequence of orthogonal polynomials $\{Q_n(x)\}$.  
 The initial values are $Q_0(x)=1, Q_1(x) = (b(0)+d(0) -x)/b(0)$ and the  recurrence relation 
 \bea\label{sec5eq20}
 -xQ_n(x) = b(n)Q_{n+1}(x) + d(n) Q_{n-1}(x) - [b(n)+d(n)]Q_n(x), n >0.
 \eea
 If $\{Q_n(x)\}$ is orthogonal with respect to a discrete measure then the 
 dual polynomials $\{Q_n(x_j):j=0, 1, \cdots\}$, where now the variable 
 is $n$ and the degree is $j$ is called the polynomial dual to $\{Q_n(x)\}$. 
 There are many instances of this in the Askey scheme \cite{Koe:Swa}. The bispectral problem of Duistermaat and Gr\"unbaum \cite{DG1986} is also related to this phenomenon.
 In such cases the dual polynomials will satisfy the difference equation 
 \bea\label{sec5eq21}
 \xi y(x) = b(x)y(x+1) + d(x) y(x-1) - [b(x)+d(x)]y(x).
 \eea
 In other words 
 \bea\label{sec5eq22}
 b(x+1) \Delta^2y(x) + [b(x+1)-d(x+1) -\xi] \Delta y(x) - \xi y(x)=0. 
 \eea
The case of birth and death process polynomials when  $b(x)$ and $d(x)$ are polynomials of degree at most $2$ and 
$b(x)-d(x)$ is of degree at most $1$ was studied in \cite{Ism:Let:Val}, where their orthogonality measure was also constructed.  Their dual polynomials will then satisfy the hypergeometric difference equation  
\begin{eqnarray}\label{sec5eq18}
({a_2x^2+a_1x+a_0})\Delta^2y(x)+({b_1 x+b_0}) \Delta y(x)-ky(x)=0,
\end{eqnarray}
This equation may also be considered as a difference analogue of the hypergeometric difference equation. 
\begin{eqnarray}\label{sec5eq19}
\lambda_0(x)=- \frac{b_1 x+b_0}{a_2x^2+a_1x+a_0} ,\qquad s_0(x)=\frac{k}{a_2x^2+a_1x+a_0},
\end{eqnarray}
it follow, by the recursive evaluation of the DAIM sequence, that 
the termination condition  $\delta_n(x)=\lambda_n(x)s_{n-1}(x)-\lambda_{n-1}(x)s_n(x),n=1,2,\dots$ yields
\begin{align*}
\delta_1(x)&=\dfrac{b_1-k}{a_0+(1+x) (a_1+a_2(1+x))}\,\delta_0(x)\\
\delta_2(x)&=\dfrac{2a_2+2b_1-k}{a_0+(2+x)(a_1+a_2(2+x))}\,\delta_1(x)\\
\delta_3(x)&=\dfrac{6 a_2+3 b_1-k}{a_0+(3+x)(a_1+a_2(3+x))}\,\delta_2(x)\\
\delta_4(x)&=\dfrac{12 a_2+4 b_1-k}{a_0+(4+x)(a_1+a_2(4+x))}\,\delta_3(x).
\end{align*} 
For arbitrary $n$, it is not difficult to show that, for $n=1,2,\dots$,
\begin{eqnarray}\label{sec5eq20}
\delta_n(x)&=\dfrac{n(n-1) a_2+n b_1-k}{a_0+(n+x)(a_1+a_2(n+x))}\,\delta_{n-1}(x)=\dfrac{\prod_{j=0}^n j(j-1) a_2+j b_1-k}{\prod_{j=0}^n a_0+(j+x)(a_1+a_2(j+x))}\,\delta_0(x)\end{eqnarray} 
where $\delta_0(x)=s_0(x)$ given $\lambda_{-1}(x)=-1,s_{-1}(x)=0.$ Clearly, for $\delta_{n-1}(x)\neq 0$, $\delta_n(x)=0$ only if 
\begin{eqnarray}\label{sec5eq21}
k=n(n-1)\,a_2+n\,b_1,\qquad n=1,2,\dots.
\end{eqnarray} 
In this case, the polynomial solutions of the difference equation
\begin{eqnarray}\label{sec5eq22}
\Delta^2y(x)=-\dfrac{b_1 x+b_0}{a_2x^2+a_1x+a_0}\, \Delta y(x)+\dfrac{n(n-1)\,a_2+n\,b_1}{a_2x^2+a_1x+a_0}y(x),
\end{eqnarray}
are given as
\begin{itemize}
\item For $n=0$, $y_0(x)=1.$
\item For $n=1$, 
\begin{align*}
y_1(x)=\prod_{i=x_0}^{x-1}\left[1-\dfrac{s_0(x)}{\lambda_0(x)}\right]=x + \dfrac{b_0}{b_1}.
\end{align*}
\item For $n=2$, 
\begin{align*}
y_2(x)&=\prod_{i=x_0}^{x-1}\left[1-\frac{s_1(x)}{\lambda_1(x)}\right]=x^2 + \dfrac{(2 a_1 + 2 b_0 + b_1)}{(2 a_2 + b_1)}\,x+\dfrac{ (
 a_0 (2 a_2 + b_1) + b_0 (a_1 + a_2 + b_0 + b_1))}{((a_2 + b_1) (2 a_2 + b_1))}.
\end{align*}
\item For $n=3$, 
\begin{align*}
y_3(x)&=x^3 +\dfrac{ 3 (2 a_1 + 2 a_2 + b_0 + b_1)}{(4 a_2 + b_1)}\,
  x^2 \nonumber\\
  &+ \frac{(6 a_1^2 + 12 a_2 b_0 + 3 b_0^2 + 5 a_2 b_1 + 6 b_0 b_1 + 2 b_1^2 + 
     3 a_0 (4 a_2 + b_1)+ 
     9 a_1 (2 a_2 + b_0 + b_1))}{(3 a_2 + b_1) (4 a_2 + b_1)}\,x
  \nonumber\\
  & +(a_0 (36 a_2^2 + 10 a_2 b_0 + 24 a_2 b_1 + 3 b_0 b_1 + 4 b_1^2 + 
       4 a_1 (3 a_2 + b_1))\\
       & + 
    b_0 (2 a_1^2 + 10 a_2^2 + 7 a_2 b_0 + b_0^2 + 9 a_2 b_1 + 3 b_0 b_1 + 
       2 b_1^2\\
       & + a_1 (12 a_2 + 3 b_0 + 5 b_1))/({(2 a_2 + b_1) (3 a_2 + 
      b_1) (4 a_2 + b_1)}).
\end{align*}
and so on for higher order.
\end{itemize}
As special cases of the hypergeometric difference equation (\ref{sec5eq18}) are the Meixner difference equation
\begin{eqnarray}\label{sec5eq23}
\Delta^2 y(x)=-\dfrac{(\mu-1)(x-n+1)+\mu\,\delta}{\mu\,(x+\delta+1)}\,\Delta y(x) - \dfrac{k}{\mu\,(x+\delta+1)} y(x),
\end{eqnarray}
and the Hermite difference equation
\begin{eqnarray}\label{sec5eq24}
\Delta^2 y(x)=(a\,x+b)\,\Delta y(x) + \gamma\, y(x).
\end{eqnarray}

\section{$q$-Asymptotic Iteration Method ($q$AIM)}
We consider the linear second-order $q$-difference equation
\begin{eqnarray}\label{qdeq1}
D_q^2 y(x)&=\lambda_0(x)D_q y(x)+s_0(x) y(x).
\end{eqnarray} 
In general, we have
\begin{eqnarray}\label{qdeq2}
D_q^{n+2} y(x)&=\lambda_{n}(x)D_q y(x)+s_n(x)y(x), 
\end{eqnarray} 
where the functions $\l_n(x)$ and $s_n(x)$ are generated by 
\bea\label{qdeq3}
\lambda_n(x)=D_q\lambda_{n-1}(x)+\lambda_{n-1}(qx)\lambda_0(x)+s_{n-1}(qx), \qquad s_n(x)=D_qs_{n-1}(x)+\lambda_{n-1}(qx)s_0(x).
\eea
If the termination condition 
\begin{eqnarray}\label{qdeq4}
\frac{s_n(x)}{\lambda_n(x)}=\frac{s_{n-1}(x)}{\lambda_{n-1}(x)}.
\end{eqnarray}
holds for some $n$ then 
\begin{eqnarray}\label{qdeq5}
\dfrac{D_q^{n+2} y(x)}{D_q^{n+1} y(x)}&=\dfrac{\lambda_{n}(x)D_q y(x)+s_n(x)y(x)}
{\lambda_{n-1}(x)D_q y(x)+s_{n-1}(x)y(x)}=\dfrac{\lambda_{n}(x)}{\lambda_{n-1}(x)}.
\end{eqnarray} 
Equation (\ref{qdeq5}) can be written as
\begin{eqnarray}\label{qdeq6}
D_q\left(D_q^{n+1} y(x)\right)&
=\dfrac{\lambda_{n}(x)}{\lambda_{n-1}(x)}{D_q^{n+1} y(x)}
\end{eqnarray} 
 This is a first-order $q$-difference equation in $D_q^{n+1} y(x)$ and according to (\ref{sec2eq16})-(\ref{sec2eq17}) its solution is 
\begin{eqnarray}\label{qdeq7}
D_q^{n+1} y(x)
&= D_q^{n+1}y(0)\prod_{k=0}^{\infty}\left[1-(1-q)q^k x
\frac{\lambda_n(q^kx)}{\lambda_{n-1}(q^kx)}\right]^{-1}.
\end{eqnarray}
The infinite product will converge if the ratio $\l_n(x)/\l_{n-1}(x)$ is bounded in a neighbourhood of $x=0$ in the complex plane. On the other hand (\ref{qdeq2}) implies 
\begin{eqnarray}\label{qdeq8}
\lambda_{n-1}(x)D_q y(x)+s_{n-1}(x)y(x)
&=\dfrac{D_q^{n+1}y(0)}{\prod_{k=0}^{\infty}\left[1-(1-q)q^k x\dfrac{\lambda_n(q^kx)}{\lambda_{n-1}(q^kx)}\right]}
\end{eqnarray}
or equivalently 
\begin{eqnarray}\label{qdeq9}
D_q y(x)=-\dfrac{s_{n-1}(x)}{\lambda_{n-1}(x)}y(x)+\dfrac{D_q^{n+1}y(0)}{\lambda_{n-1}(x)}{\prod_{k=0}^{\infty}\left[1-(1-q)q^k x\dfrac{\lambda_n(q^kx)}{\lambda_{n-1}(q^kx)}\right]^{-1}}. 
\end{eqnarray}
In view of (\ref{sec2eq18})--(\ref{sec2eq19}) the solution of the original second-order $q$-difference equation (\ref{qdeq1})  is given by 
\bea
\label{qdeq10}
\begin{gathered}
y(x) =\dfrac{y(0)}{\prod_{k=0}^{\infty}\left[1+(1-q)q^k x\dfrac{s_{n-1}(q^kx)}{\lambda_{n-1}(q^kx)}\right]}\\
+D_q^{n+1}y(0)
\sum_{k=0}^{\infty}\frac{\dfrac{(1-q)q^{k}x}
{\lambda_{n-1}(q^kx)}}{\prod_{i=0}^{\infty}
\left[1-(1-q)q^{i+k} x\dfrac{\lambda_n(q^{i+k}x)}
{\lambda_{n-1}(q^{i+k}x)}\right]\prod_{j=0}^{k}
\left[1-(1-q)q^{j}x\dfrac{s_{n-1}(q^{j}x)}{\lambda_{n-1}(q^{j}x)}\right]}. 
\end{gathered}
\eea
It is known that $y_1$ and $y_2$ are linearly independent 
if and only if the determinant 
\begin{equation} \label{qdeq11}
\left| \begin{array}{cc}
y_1(x) & D_q y_1(x) \\
y_2(x) & D_qy_2(x)
\end{array}\right| \ne 0, 
\end{equation}
for all $x$ in the domain of definition.  It is easy to see that this is case 
with the two solutions given above.

\section{Implementation and Examples}
Our first example is the $q$-Laguerre polynomials,
 \cite[p. 109]{Koe:Swa}. 
They satisfy the $q$-Difference equation:
\begin{eqnarray} \label{qdeq12}
\left(1+q^{\eta}+q^{\eta+n} x\right)y(x) 
= q^\eta (1 + x)\,y(q\,x)  + y(q^{-1}x),
\end{eqnarray}
It is easy to write this equation in the form 
\begin{eqnarray} \label{qdeq13}
\begin{gathered}
D_q^2y(x)=\left(\frac{q^{-1 - \eta} -1 -(1+q-q^{n}) x}{(q-1)\,x\, (1+q\, x)}\right)D_qy(x)+\left(\dfrac{q^n-1}{(q-1)^2\, x\, (1+q\, x)}\right)y(x), 
\end{gathered}
\end{eqnarray}
with 
\bea
 \label{qdeq14}
\lambda_0(x)= \frac{q^{-1 - \eta} -1 -(1+q-q^{n}) x}{(q-1)\,x\, (1+q\, x)}, 
\quad s_0(x) =\dfrac{q^n-1}{(q-1)^2\, x\, (1+q\, x)}
\eea
Using (\ref{qdeq6}) and the definition 
\begin{eqnarray}  \label{qdeq15}
\delta_m(x)=\lambda_m(x)s_{m-1}(x)-\lambda_{m-1}(x)s_{m}(x), 
\qquad m=1,2,\dots
\end{eqnarray}
it follow that
\begin{eqnarray*}\begin{aligned}
\delta_1&=\dfrac{(q - q^n) (q^n-1)}{x^2(qx+1)(q^2x+1)(q-1)^4},\\
\delta_2&=\dfrac{(q - q^n) (q^2 - q^n) (q^n-1)}{x^3(qx+1)(q^2x+1)(q^3x+1)(q-1)^6},\\
\delta_3&=\dfrac{(q - q^n) (q^2 - q^n) (q^3 - q^n) (q^n-1)}{x^4(qx+1)(q^2x+1)(q^3x+1)(q^4x+1)(q-1)^8},\\
\delta_4&=\dfrac{(q - q^n) (q^2 - q^n) (q^3 - q^n) (q^4 - q^n) ( q^n-1)}{x^5(qx+1)(q^2x+1)(q^3x+1)(q^4x+1)(q^5x+1)(q-1)^{10}},\\
\delta_5&=\dfrac{(q - q^n) (q^2 - q^n) (q^3 - q^n) (q^4 - q^n) (q^5 - q^n) (q^n-1)}{x^6(qx+1)(q^2x+1)(q^3x+1)(q^4x+1)(q^5x+1)(q^6x+1)(q-1)^{12}}. 
\end{aligned}\end{eqnarray*}
In general we observe the pattern
\begin{eqnarray}  \label{qdeq16}
\delta_{m+1}=\dfrac{q^{m+1}-q^n}{(q-1)^2x(1+q^{m+2})}\delta_{m},\qquad m=0,1,2,\dots 
\end{eqnarray}
which has been tested up to $m = 15$. 
Based on this we conclude that  $\delta_m=0$ if and only if $m=n$.  For an exact solution, we use the following expression:
\begin{eqnarray}
\label{qdeq17}
y_n(x)
&=\dfrac{y_n(0)}{\prod\limits_{k=0}^{\infty}\left[1+(1-q)q^k x\dfrac{s_{n-1}(q^kx)}{\lambda_{n-1}(q^kx)}\right]}.
\end{eqnarray}
For example, the polynomial solution of degree $5$ is 

\begin{eqnarray}\label{qdeq18}
\begin{gathered}
y_5(x)
= y_5(0) \prod\limits_{k=0}^{\infty}\left[1+(1-q)\,q^k x\,\frac{s_{4}(q^kx)}
{\lambda_{4}(q^kx)}\right]^{-1}\\
=y_5(0)\left(1+\frac{q^{1+\eta} \left(1-q^5\right)}
{(1-q)(q^{1+\eta}-1)}\right.\,x
\left.+\frac{q^{4 + 2 \eta} (1 + q^2) (1 - q^5))}
{(1-q)(q^{ 1 + \eta}-1) (q^{2 + \eta}-1)}\,x^2\right.\\
+\frac{q^{9+3 \eta} \left(1+q^2\right) \left(1-q^5\right)}
{(1-q)\left(q^{1+\eta}-1\right) \left(q^{2+\eta}-1\right) \left(q^{3+\eta}-1
\right)}\,x^3\\
\left.+\frac{q^{4 (4+ \eta)} \left(1-q^5\right)}
{(1-q)\left(q^{1+ \eta}-1\right) \left(q^{2+\eta}-1\right) 
\left(q^{3+\eta}-1\right) \left(q^{4+\eta}-1\right)}\, x^4\right.\\
+\left.\frac{q^{5 (5+\eta)}}{\left(q^{1+\eta}-1\right)
 \left(q^{2+\eta}-1\right) \left(q^{3+\eta}-1\right)
  \left(q^{4+\eta}-1\right) \left(q^{5+\eta}-1\right)}\,x^5\right). 
  \end{gathered}
  \end{eqnarray}

\noindent More importantly we can also write down a second solution to the $q$-difference equation. It is know that the second solution is related to the function of the second kind, see \cite{Ismbook}, \cite{Ism:Man}. 
\vskip0.1true in
\noindent Our second example is the Al-Salam-Carlitz polynomials $\{U_n(x)\}$, 
\cite{Ismbook}, \cite{Koe:Swa}. Their $q$-Difference equation is 
\begin{eqnarray}
\label{qdeq19}\begin{gathered}
aq^{n-1}y(q^2\,x)=\left(a q^{-1+n}+a q^n-(1+a) q^{1+n} x+q^2 x^2\right) y(qx)-q^n(1-qx)(a-qx)y(x).\end{gathered}
\end{eqnarray}
Thus
\begin{eqnarray}
\label{qdeq20}
D_q^2 y(x)=\left(\frac{q+a q-q^{2-n} x}{a-a q}\right)D_qy(x)-\frac{q^{2-n} \left(-1+q^n\right)}{a (-1+q)^2}y(x)
\end{eqnarray}
The termination condition $\delta_n(x)=\lambda_n(x)s_{n-1}(x)-s_n(x)\lambda_{n-1}(x)\equiv 0$, $n=1,2\dots$
where $\{\lambda_n(x)\}$ and $\{s_n\}$ satisfy, see (\ref{qdeq3}),
\begin{align*}
 \delta_1(x) &= \dfrac{q^{2(2-n)}(q^n-1) (q - q^n)}{a^2(q-1)^4}=\dfrac{q^{2-n}(q-q^n)}{a(q-1)^2}\delta_0(x), \\
\delta_2(x)&=\dfrac{q^{3(2-n)}(q^n-1)(q - q^n) (q^2 - q^n)}{a^3(q-1)^6}=\dfrac{q^{2-n}(q^2-q^n)}{a(q-1)^2}\delta_1(x),\\
 \delta_3(x)&=\dfrac{q^{4(2-n)}(q^n-1)(q - q^n) (q^2 - q^n)(q^3-q^n)}{a^4(q-1)^8}=\dfrac{q^{2-n}(q^3-q^n)}{a(q-1)^2}\delta_2(x),\\
\delta_4(x)&=\dfrac{q^{5(2-n)}(q^n-1)(q - q^n) (q^2 - q^n)(q^3-q^n)(q^4-q^n)}{a^5(q-1)^{10}}=\dfrac{q^{2-n}(q^4-q^n)}{a(q-1)^2}\delta_3(x),\\
\delta_5(x)&=\dfrac{q^{6(2-n)}(q^n-1)(q - q^n) (q^2 - q^n)(q^3-q^n)(q^4-q^n)(q^5-q^n)}{a^6(q-1)^{12}}=\dfrac{q^{2-n}(q^5-q^n)}{a(q-1)^2}\delta_4(x).
\end{align*}
We may then observe the pattern  
\begin{eqnarray}
\label{qdeq21}
\delta_{m+1}(x)=\dfrac{q^{2-n}(q^{m+1}-q^n)}{a(q-1)^2}\delta_m,
\quad m=1,2,\dots.
\end{eqnarray}
We verified this pattern up to $m = 15$. Thus the smallest $m$ which makes 
$\delta_m(x)=0$ is $m=n$. 
The polynomials solution is then given by (\ref{qdeq17}). 
 For example the polynomial of order five is given by
{\small
\begin{eqnarray*}
\begin{gathered}
y_5(x)/y_5(0) 
= 1  
-\frac{(1+q+q^2+q^3+q^4) ((1+a^4) q^4+a q (1+q) 
(1+q^2)(1+a^2)+a^2 (1+q^2) (1+q+q^2))}
{(1+a) q^2 (q^6+a^4 q^6+a q^2 (1+q) (1+q^2)+a^3 q^2 (1+q) 
(1+q^2)+a^2 (1+q^2) (1+q+q^4))}\,x \qquad  \qquad \\
+\frac{(1+q^2) (a+a q+(1+a^2) q^2) (1+q
+q^2+q^3+q^4)}{q^3 (q^6+a^4 q^6+a q^2 (1+q) (1+q^2)
+a^3 q^2 (1+q) (1+q^2)+a^2 (1+q^2) 
(1+q+q^4))}\,x^2 \qquad\\
 -\frac{(a+(1+a+a^2) q) (1+q^2)
 (1+q+q^2+q^3+q^4)}{(1+a) q^4 (q^6+a^4 q^6+a q^2 (1+q) 
 (1+q^2)+a^3 q^2 (1+q) (1+q^2)+a^2 (1+q^2) 
 (1+q+q^4))}\,x^3 \qquad \qquad \\
+\frac{1+q+q^2+q^3+q^4}{q^4 (q^6+a^4 q^6+a q^2 (1+q) 
(1+q^2)+a^3 q^2 (1+q) (1+q^2)+a^2 (1+q^2) 
(1+q+q^4))}\,x^4 \qquad\\
.-\frac{x^5}{(1+a) q^4 (q^6+a^4 q^6+a q^2 (1+q) (1+q^2)
+a^3 q^2 (1+q) (1+q^2)+a^2 (1+q^2)
 (1+q+q^4)}.\qquad 
 \end{gathered}
 \end{eqnarray*}
}

 Our third example is the Stieltjes-Wigert $q$-difference equation, 
\cite[page 116]{Koe:Swa2}. 
The $q$-Difference equation satisfied by the Stieltjes--Wigert 
polynomials  is
\begin{eqnarray}\label{qdeq22}
-x(1-q^n)y(x)&=xy(q\,x)-(1+x)y(x)+y(q^{-1}x),
\end{eqnarray}
which has the equivalent form 
\begin{eqnarray}\label{qdeq23}
D_q^2 y(x)&=\left(\dfrac{1-q \left(1+q-q^n\right) x}{(q-1) q^2 x^2}\right)D_q y(x)+\dfrac{q^n-1}{(q-1)^2 q x^2}y(x).
\end{eqnarray} 
Using the recursion \ref{qdeq3} with 
\begin{eqnarray}\label{qdeq24}
\lambda_0(x) = \left(\frac{1-q \left(1+q-q^n\right) x}{(q-1) q^2 x^2}\right), 
\quad s_0(x)= \frac{q^n-1}{(q-1)^2 q x^2},
\end{eqnarray}
we find that 
\begin{align*}
  \delta_1(x)&= \dfrac{(q^n-1) (q^n-q)}{q^3x^4(q-1)^4}=\dfrac{q^n-q}{q^2x^2(q-1)^2}\delta_0(x),\quad \\
  \delta_2(x)&= \dfrac{(q^n-1) (q^n-q)(q^n-q^2)}{q^6x^6(q-1)^6}=\dfrac{q^n-q^2}{q^3x^2(q-1)^2}\delta_1(x),\\
  \delta_3(x)& =\dfrac{(q^n-1) (q^n-q)(q^n-q^2)(q^n-q^3)}{q^{10}x^8(q-1)^8}=\dfrac{q^n-q^3}{q^4x^2(q-1)^2}\delta_2(x),\\
  \delta_4(x)&=\dfrac{(q^n-1) (q^n-q)(q^n-q^2)(q^n-q^3)(q^n-q^4)}{q^{15}x^{10}(q-1)^{10}}=\dfrac{q^n-q^4}{q^5x^2(q-1)^2}\delta_3(x),\\
 \delta_5(x)& =\dfrac{(q^n-1) (q^n-q)(q^n-q^2)(q^n-q^3)(q^n-q^4)(q^n-q^5)}{q^{21}x^{12}(q-1)^{12}}=\dfrac{q^n-q^5}{q^6x^2(q-1)^2}\delta_4(x).
\end{align*}
This suggests the pattern 
\begin{eqnarray}\label{qdeq25}
\delta_{m+1} =\dfrac{q^n-q^{m+1}}{q^{m+2}x^2(q-1)^2}\delta_m(x),\qquad m=0,1,2,\dots.
\end{eqnarray}
Again, we verified this up to $m = 15$. Thus the smallest $m$ for which 
$\delta_m(x)=0$ is $m=n$. 
The polynomial  solutions are then given by (\ref{qdeq17}). 
The fifth-order polynomial solution is given by
\begin{eqnarray} \nonumber 
\begin{gathered}
y_5(x)=y_5(0)(1- \left(1+q+q^2+q^3+q^4\right)\,(qx) + 
 \left(1+q^2\right) \left(1+q+q^2+q^3+q^4\right)\,(q^2x)^2 
 \\
-\left(1+q^2\right) \left(1+q+q^2+q^3+q^4\right)\,(q^3x)^3+\left(1+q
+q^2+q^3+q^4\right)\,(q^4x)^4-(q^{5}\,x)^5.
\end{gathered}
\end{eqnarray}
This can written in the form 
\begin{eqnarray}  
\label{qdeq26}
\begin{gathered}
\frac{y_5(x)}{y_5(0)}
= 1+\frac{(1-q^5)}{1-q)}q(-x) + \frac{(1-q^5)(1-q^4)}{(1-q)(1-q^2)}
\; q^4 (-x)^2  
 \\
+ \frac{(1-q^5)(1-q^4)}{(1-q)(1-q^2)}\; q^9 (-x)^3   + \frac{(1-q^5)}{(1-q)}\; q^{16} (-x)^4  
+ q^{25}(-x)^5.
\end{gathered}
\end{eqnarray}
From this pattern the following pattern is clear
\begin{eqnarray} 
\frac{y_n(x)}{y_n(0)} = \sum_{k=0}^n \frac{(q;q)_n}{(q;q)_k(q;q)_{n-k}}\; (-1)^k q^{k^2}x^k,
\end{eqnarray}
which can then be proved rigorously. 

\begin{rem}
It is important to note that it is not surprising that $\delta_m(x)$ for the 
$q$-Laguerre $\{L_n^{(\eta)}(x;q)\}$ and the Stieltjes--Wigert 
polynomials $\{S_n(x;q)\}$ are almost identical. The reason is that 
the part of 
$\delta_n(x)$ for the $q$-Laguerre polynomials which vanishes does not depend on $\eta$, 
and $S_n(x;q)$ and $L_n^{(\eta)}(x;q)$ 
\begin{eqnarray}\label{qdeq27}
S_n(x;q)  = \lim_{\eta \to \infty} L_n^{(\eta)}(xq^{-\eta};q)
\end{eqnarray}
\end{rem}

\setcounter{equation}{0}
\section{Limitations of DAIM and $q$-AIM}
In this section we show the limitations of the both DAIM and $q$-AIM by 
applying it to the case of linear second-order difference equation with constant coefficients. The case of linear second-order differential  equation with constant coefficients is similar. Consider the difference equation
\begin{equation}\label{sec5eq1}
\Delta^2 y(x)=a\, \Delta y(x) + b\, y(x)
\end{equation}
where $\lambda_0(x)\equiv a$ and $s_0(x)\equiv b$ are polynomials in $a$ and $b$. Therefore, the DAIM sequences (\ref{sec3eq5}) yields
\begin{eqnarray*}
\lambda_1(x)=a^2+b, \quad 
s_1(x) =ab, \quad \lambda_2(x)=a(a^2+2b),\quad 
s_2(x)=b(a^2+b)..
\end{eqnarray*}
In the present case the recurrence relations  \eqref{sec3eq5} become 
\bea
\label{eq42}
\lambda_n=
\lambda_{n-1}\lambda_0 +s_{n-1}, \qquad 
s_n= \lambda_{n-1}s_0.
\eea
it follows that   $\lambda_n$ and  $s_n$  are polynomials in $a$ and $b$ of total degree $n+1$.  
The termination condition 
 $$\frac{s_{n}(x)}{\lambda_n(x)} =\frac{s_{n-1}(x)}{\lambda_{n-1}(x)}\quad \textup{implies} \quad 
\dfrac{ \lambda_{n-1}s_0}{\lambda_{n-1}\lambda_0+s_{n-1}} =\dfrac{s_{n-1}}{\lambda_{n-1}},
$$
which leads to the quadratic equation
$$
\left(\frac{s_{n-1}(x)}{ \lambda_{n-1}(x)}\right)^2+\lambda_0(x)\left( \frac{s_{n-1}(x)}{\lambda_{n-1}(x)}\right)-s_0(x)=0
$$
with solutions
\bea
\label{eqCC1}
 \frac{s_{n-1}(x)}{ \lambda_{n-1}(x)} =\frac{-a\pm\sqrt{a^2+4b}}{2}.
\eea
It is now  clear that there is no $n$ for which (\ref{eqCC1}) holds because its left-hand side is a rational function in $a$ and $b$ but its right-hand side is an algebraic non-rational function. What is surprising is 
that the method nevertheless gives the correct answer. Indeed  
this gives  the solutions 
\begin{eqnarray}
\begin{gathered}
y_{+}(x)=\prod_{i=n_0}^{x-1} \left[1-\frac{s_{x-1}(i)}{\lambda_{x-1}(i)\,} \right]=C_1\left[1+\dfrac{a-\sqrt{a^2+4b}}{2} \right]^x,\\
y_{-}(x)= \prod_{i=n_0}^{x-1} \left[1-\frac{s_{x-1}(i)}{\lambda_{x-1}(i)\,} \right]=C_2 \left[1+\dfrac{a+\sqrt{a^2+4b}}{2} \right]^x.
\end{gathered}
\end{eqnarray}
Surprisingly, this is the correct answer, \cite{Jor}, \cite{Mil}.
\vskip0.1true in
\noindent  One is tempted to use (\ref{eq42}) to get 
 \bea
 \nonumber
 \dfrac{s_n}{\lambda_n} = \frac{b}{a+ \dfrac{s_{n-1}}{\lambda_{n-1}}},
 \eea
 which when iterated leads to the continued fraction \cite{Jon:Thr}
\bea
 \nonumber
 \frac{s_n}{\lambda_n} = \frac{b}{a}\,  {}_+ \frac{b}{a} \, {}_+ \cdots. 
 \eea
Here again we face issues of rigor because the above continued 
fraction is a periodic continued fraction and will converge to a unique 
value involving the minimal solution, via Pincherle's theorem 
\cite{Jon:Thr}. So even formally we get only one solution. 

There is also inherent inconsistency in applying AIM, DAIM, or $q$-AIM 
to equations with constant coefficients. In all cases it has been been 
proved that the terminating condition holds if and only if the equation
in question has a polynomial solution. This automatically excludes all 
equations with constant coefficients, except trivial ones like 
$y^{\prime\prime} =0, \Delta^2\, y(x) =0, D_q^2\, y(x)=0$. This also invalidates the application of AIM to general Euler equations of the type
\bea
\nonumber
x^2\, y^{\prime\prime}(x) + a\, x\, y^{\prime}(x) + b\, y(x) =0,
\eea
What is a surprise is that this invalid applications of the AIM, DAIM, or 
$q$-AIM technique give the correct answers. 
 
\section{Acknowledgments}
\medskip
\noindent Partial financial support of this work under Grant No. GP249507 from the 
Natural Sciences and Engineering Research Council of Canada
 is gratefully acknowledged. 
\medskip

\section*{References}

\end{document}